\newtheorem{theorem}{Theorem}[section]
\newtheorem{lemma}[theorem]{Lemma}
\newtheorem{corollary}[theorem]{Corollary}
\newtheorem{proposition}[theorem]{Proposition}
\theoremstyle{definition}
\newtheorem{definition}[theorem]{Definition}
\theoremstyle{remark}
\newtheorem{remark}[theorem]{Remark}
\newcommand{\supp}[1]{\textrm{supp}(#1)}
\newcommand{\sshf}[1]{\mathcal{O}_{#1}}
\newcommand{\shf}[1]{\mathcal{#1}}
\newcommand{\prj}[1]{\mathbb{P}^{#1}}
\newcommand{\iso}{\simeq}
\newcommand{\ses}[3]{0\rightarrow#1\rightarrow#2\rightarrow#3\rightarrow{0}}
\newcommand{\rk}[1]{\textrm{rank}(#1)}
\newcommand{\is}[1]{\mathscr{I}_{#1}}
\newcommand{\lsp}[1]{\left\langle{#1}\right\rangle}
\newcommand{\paren}[1]{\left(#1\right)}
\numberwithin{equation}{section}
\begin{document}
\allowdisplaybreaks
\title{On the singularities of effective loci of line bundles}

\author{Lei Song}

\address{Department of Mathematics, Statistics and Computer Science, University of Illinois at Chicago, Chicago, IL 60607-7045, USA}
\email{lsong4@uic.edu}




\dedicatory{}

\keywords{semi-regularity, Brill-Noether loci, rational singularities}

\begin{abstract}
We prove that every irreducible component of semi-regular loci of effective line bundles in the Picard scheme of a smooth projective variety has at worst rational singularities. This generalizes Kempf's result on rational singularities of $W^0_d$ for smooth curves. We also work out an example of such loci for a ruled surface.
\end{abstract}

\maketitle

\section{Introduction}
Fix a ground field $k$, which is algebraically closed and of characteristic 0. Let $X$ be a smooth projective curve of genus $g$.  For $r, d\ge 0$, the Brill-Noether locus is defined as
\begin{equation*}
 \textrm{supp}(W^r_d(X))=\{L\in\textrm{Pic}(X)\;|\; h^0(L)\ge r+1, \deg(L)=d\}.
\end{equation*}
There has been extensive research on these loci in literature (cf. \cite{ACGH}). A special kind of Brill-Noether loci is $W^0_d(X)$, which is the image of the
Abel-Jacobi map $\varphi: X_d\rightarrow\textrm{Pic}^d X$, where $X_d$ denotes the $d$th symmetric product of $X$ and $\textrm{Pic}^d X$ denotes the Picard variety of degree $d$ line bundles on $X$. When $d=g-1$, $W^0_{g-1}$ is a theta divisor if $\textrm{Pic}^{g-1}X$ is indentified with the Jacobian of the curve.

Kempf \cite{KE} proved that $W^0_d(X)$ has only rational singularities, so in particular it is Cohen-Macaulay and normal; and for $1\le d\le g-1$, the tangent cone over a point $[L]\in W^0_d(X)$ admits a rational resolution from the normal bundle of the fibre $F=\varphi^{-1}([L])$. He also computed the degree of the tangent cone, generalizing Riemman's formula on multiplicity of theta divisors. A celebrated generalization of Kempf's theorem in the case $d=g-1$ is due to Ein and Lazarsfeld \cite[Theorem 1]{Ein97} stating that any principal polarization divisor $\Theta\subset A$ on an abelian variety is normal and has rational singularities.

This paper attempts to extend part of Kempf's results on $W^0_d$ for curves to higher dimensional varieties using the approach and technique of Ein \cite{Ein}, where the author studied the normal sheaf $\shf{N}$ of the fibre $F$. He showed $\shf{N}$ can be reconstructed from the multiplication map $H^0(\sshf{F}(1))\otimes H^1(\shf{N}(-1))\rightarrow H^1(\shf{N})$, and proved that for a general curve $X$, $\shf{N}\iso\rho\sshf{F}\oplus (H^1(X, L)\otimes\Omega_{F}(1))$, where $\rho=g-(r+1)(g+r-d)$ is the Brill-Noether number. A large part of his results were built on a locally free resolution of $\shf{N}^*$.

Now let $X$ be a smooth projective variety of arbitrary dimension. Let $\textrm{Pic}(X)$ and $\textrm{Div}(X)$ denote the Picard scheme and divisor scheme, which parameterize line bundles and effective divisors on $X$ respectively. One still has the Abel-Jacobi map $\varphi: \textrm{Div}(X)\rightarrow\textrm{Pic}(X)$, where $\textrm{Div}(X)$ plays the same role as $X_d$. However, as a closed subscheme of the Hilbert scheme $\textrm{Hilb}(X)$, $\textrm{Div}(X)$ may be very singular.
Even for $\dim X=2$, an example due to Severi and Zappa in 1940s shows that $\textrm{Div}(X)$ can be nonreduced. For this reason, we restrict ourselves to those so called semi-regular line bundles (see \S 2 for definition),
and consider the semi-regular locus $W^0_{\textrm{sr}}(X)$ they form in $\textrm{Pic}(X)$. We refer to \cite{KL} for background of $\textrm{Pic}(X)$ and $\textrm{Div}(X)$. Our main theorem is

\begin{theorem}\label{main theorem}
Let $X$ be a smooth projective variety. Then any irreducible component of $W^0_{\textrm{sr}}(X)$ has only rational singularities.
\end{theorem}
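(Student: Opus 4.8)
\emph{Strategy.} The plan is to carry Kempf's argument for curves to arbitrary dimension, using the normal bundle of the Abel--Jacobi fibre in the spirit of Ein. Fix $[L]$ on an irreducible component $W$ of $W^0_{\textrm{sr}}(X)$. Since having rational singularities is a local property, and since the deformation to the normal cone presents a neighbourhood of $[L]$ in $W$ as the general fibre of a flat family over $\mathbb{A}^1$ whose special fibre is the affine tangent cone $C:=C_{[L]}W\subset T_{[L]}\textrm{Pic}(X)=H^1(X,\mathcal{O}_X)$, Elkik's theorem on the semicontinuity of rational singularities in flat families reduces the statement to showing that $C$ has rational singularities.

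Write $F=\varphi^{-1}([L])=\mathbb{P}(H^0(X,L))\subset\textrm{Div}(X)$. Semi-regularity of $L$ (of every divisor in $|L|$) means exactly that $\textrm{Div}(X)$ is smooth along $F$, so $\mathcal{N}:=N_{F/\textrm{Div}(X)}$ is a vector bundle, of rank $h^1(\mathcal{O}_X)-h^1(L)$. Following Ein, the differential of $\varphi$ identifies $\mathcal{N}$ with the kernel of the bundle map
\[
\beta\colon H^1(X,\mathcal{O}_X)\otimes\mathcal{O}_F\longrightarrow H^1(X,L)\otimes\mathcal{O}_F(1)
\]
induced by the multiplication $H^1(X,\mathcal{O}_X)\otimes H^0(X,L)\to H^1(X,L)$; semi-regularity makes $\beta$ fibrewise surjective, whence the two-term resolution $0\to H^1(X,L)^{\vee}\otimes\mathcal{O}_F(-1)\to H^1(X,\mathcal{O}_X)^{\vee}\otimes\mathcal{O}_F\to\mathcal{N}^{\vee}\to 0$. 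The total space $\mathbb{V}$ of $\mathcal{N}$ is smooth, embeds into $F\times H^1(X,\mathcal{O}_X)$ via $\mathcal{N}\hookrightarrow H^1(\mathcal{O}_X)\otimes\mathcal{O}_F$, and---as in Kempf's theorem for curves, which must be established in general---the image of the projection $\nu\colon\mathbb{V}\to H^1(X,\mathcal{O}_X)$ is the tangent cone $C$.

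It remains to analyse $\nu$. In $F\times H^1(\mathcal{O}_X)$ the subscheme $\mathbb{V}$ is the zero locus of the section of $H^1(L)\otimes p_F^{\ast}\mathcal{O}_F(1)$ obtained by substituting the tautological point of $H^1(\mathcal{O}_X)$ into $\beta$, and it has codimension $h^1(L)$, equal to the rank of that bundle; so its Koszul complex resolves $\mathcal{O}_{\mathbb{V}}$. Pushing this complex forward along $p\colon F\times H^1(\mathcal{O}_X)\to H^1(\mathcal{O}_X)$, the cohomology of $\mathcal{O}_{\mathbb{P}^{h^0(L)-1}}(-j)$ kills every term except $j=0$ and $j\ge h^0(L)$, and the hypercohomology spectral sequence---constrained by $R^{<0}p_{\ast}=0$---forces the outcome to be the (acyclic) Eagon--Northcott complex of the matrix of linear forms $\mu\colon H^1(\mathcal{O}_X)\to\textrm{Hom}(H^0(L),H^1(L))$ given by multiplication. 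Consequently $C=\{\xi:\textrm{rank}\,\mu(\xi)\le h^0(L)-1\}$ is Cohen--Macaulay, $R^i\nu_{\ast}\mathcal{O}_{\mathbb{V}}=0$ for $i>0$ and $\nu_{\ast}\mathcal{O}_{\mathbb{V}}=\mathcal{O}_C$; a dimension count, together with the fact that $\mu(\xi)$ has a one-dimensional kernel for general $\xi\in C$, gives $\dim\mathbb{V}=\dim C$ and shows $\nu$ is birational, hence a resolution. Then Grauert--Riemenschneider and Grothendieck duality yield $\nu_{\ast}\omega_{\mathbb{V}}=\omega_C$, so $C$ has rational singularities, and with the first paragraph $W$ does too at $[L]$; as $[L]$ was arbitrary, the theorem follows.

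\emph{Main obstacle.} The two statements used as black boxes are where the work lies. First, the identification of the semi-regularity hypothesis with smoothness of $\textrm{Div}(X)$ along the whole of $F$---and hence with $\mathcal{N}$ being the bundle above---must be pinned down from the definition in \S 2; this is the only place the hypothesis is indispensable. Second, one needs the identification of $C_{[L]}W$ with the maximal-minors scheme of $\mu$. For curves this is Kempf's, obtained from the local presentation of $W^0_d$ in $\textrm{Pic}$ as a degeneracy locus by pushing forward a twist of the Poincar\'e bundle and linearizing; in higher dimension the same presentation works for $h^0$, but one must verify that the higher direct images $R^{\ge 2}\pi_{\ast}$ occurring for $\dim X\ge 2$ do not disturb the linearization. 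I expect this last point to be the main technical obstacle.
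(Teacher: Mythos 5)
There is a genuine gap, and it sits exactly where you flag it — but it is not a removable technicality, and the paper's proof is organized precisely to avoid it. Your whole argument funnels through the identification of the tangent cone $C_{[L]}W$ with the image of $\nu\colon\mathbb{V}\to H^1(\mathcal{O}_X)$, equivalently with the locus $\{\xi:\operatorname{rank}\mu(\xi)\le h^0(L)-1\}$. For curves this comes from presenting $W^0_d$ locally as the degeneracy locus of a two-term complex of bundles computing $R^0\pi_*$ and $R^1\pi_*$ of a Poincar\'e bundle and then linearizing; for $\dim X\ge 2$ the direct-image complex has length $>2$, the locus $\{h^0\ge 1\}$ has no such determinantal presentation, and there is no argument in your proposal (nor an obvious one) showing the tangent cone equals the full rank-degeneracy locus of $\mu$. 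The paper never proves, and never needs, this identification: from the resolution of $\mathcal{N}^*$ (Theorem \ref{conormal sheaf}) it deduces only $0$-regularity and the \emph{surjectivity} of $\bigoplus_n\bar{\mathfrak{m}}^n/\bar{\mathfrak{m}}^{n+1}\to\bigoplus_n H^0(S^n\mathcal{N}^*)$ (Lemma \ref{alpha map}); the corresponding isomorphism — which is what your tangent-cone description amounts to — is quoted from Ein only under the extra hypothesis $R\le q$. The main theorem is then proved with no tangent cone and no Elkik: formal functions along the fibre give $R^{>0}\varphi_*\mathcal{O}=0$ and $\varphi_*\mathcal{O}_Y\cong\mathcal{O}_\Omega$ (hence normality of the component), and Theorem \ref{Kovacs} applied to the Abel--Jacobi map itself concludes.

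Independently, your last step is false as stated: you assert that $\mu(\xi)$ has one-dimensional kernel for general $\xi\in C$, so that $\nu$ is birational and Grauert--Riemenschneider plus duality finish. In higher dimension the generic line bundle on a component of $W^0_{\textrm{sr}}$ can have $h^0\ge 2$, i.e.\ the Abel--Jacobi map onto the component — and correspondingly $\nu$ — has positive-dimensional generic fibres; the paper states explicitly that $\varphi$ is not birational in general, which is exactly why it uses Kov\'acs's criterion (needing only $f_*\mathcal{O}_Y\cong\mathcal{O}_X$, $R^{>0}f_*\mathcal{O}_Y=0$, and rational singularities upstairs) instead of a resolution. Without birationality, $\nu_*\omega_{\mathbb{V}}=\omega_C$ is not available and your argument for rational singularities of $C$ collapses; repairing it would force you into a Kov\'acs-type argument anyway, at which point one may as well apply it directly to $\varphi$, as the paper does, bypassing both the tangent cone and the deformation step.
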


When $X$ is a curve, $W^0_{\textrm{sr}}=\coprod_{d\ge 0} W^0_d$ and $W^0_d$ is irreducible, and so the theorem recovers Kempf's result that $W^0_d$  has rational singularities.

The  paper is organized as follows: in \S 2, we study the conormal sheaf of fibres of the Abel-Jacobi map. We derive a resolution of the sheaf and obtain several interesting consequences. With a criterion of rational singularities based on Kov\'{a}cs's work, we prove our main theorem. In \S 3, one example of an irreducible component $W^0_{\textrm{sr}}$ of a ruled surface is analyzed in detail. In the appendix we prove an auxiliary result on varieties swept out by linear spans of divisors of linear systems on an embedded curve.

{\it Acknowledgments}:
The author is very grateful to his advisor Lawrence Ein for suggesting this problem and many helpful discussions. He would like to thank Chih-Chi Chou, Izzet Coskun for stimulating discussions concerning \S 3, and thank Wenbo Niu and Pete Vermeire for answering his questions. He also would like to thank the anonymous referee, whose careful reading and valuable suggestions greatly improve this paper.

\section{Rational singularities of $W^0_{\text{sr}}(X)$}

\subsection{Semi-regular line bundles and their loci}
Let $X$ be a smooth projective variety.
It is well known that $\textrm{Pic}(X)$ is separated and smooth over $k$. As $\textrm{Hilb}(X)$ breaks into connected components according to Hilbert polynomials, so does $\textrm{Pic}(X)$. Fix an ample line bundle $\sshf{X}(1)$ on $X$. For each line bundle $L$ on $X$, there exists a $\mathbb{Q}$-coefficient polynomial $P_{L}$ such that $P_{L}(n)=\chi(L(n))$ for $n\in\mathbb{Z}$. The $P_{L}$ is constant over any connected component of $\textrm{Pic}(X)$.
The Abel-Jacobi map $\varphi: \textrm{Div}(X)\rightarrow \textrm{Pic}(X)$, which sends an effective divisor $D$ to the associated line bundle $\sshf{X}(D)$, is a projective morphism. For any line bundle $L$ on $X$, canonically $\varphi^{-1}([L])\iso |L|$, where $[L]$ is the corresponding point of $L$ in $\textrm{Pic}(X)$ (cf. \cite{KL}).

\begin{definition}\label{semi-regular line bundle}
An effective Cartier divisor $D$ on $X$ is \textit{semi-regular} if the boundary map
\begin{equation*}
    \partial: H^1(\sshf{D}(D))\rightarrow H^2(\sshf{X})
\end{equation*}
is injective.
A line bundle $L$ is \textit{semi-regular} if $L$ is effective, and $D$ is semi-regular for all $D\in|L|$.
\end{definition}

\begin{remark}
If $X$ is a curve, then all effective divisors, line bundles are automatically semi-regular.
The reader can check that a necessary condition for $L$ to be semi-regular is that $h^1(L)\le q$ (see Corollary \ref{Clifford inequality}), and sufficient conditions are either $h^1(L)=0$ or $H^1(\sshf{D}(D))=0$ for all $D\in |L|$. The second one is however rather strong. For instance, when $X$ is a surface and $p_g=h^0(\omega_X)>0$, $H^1(\sshf{D}(D))=0$ implies that $\supp D\subset \textrm{Bs}(|\omega_X|)$.
\end{remark}

\begin{theorem}[Severi-Kodaira-Spencer]\label{Severi-Kodaira-Spencer}
Assume $\text{char}(k)=0$. $\textrm{Div}(X)$ is smooth at $[D]$ of the expected dimension
\begin{equation}\label{expected dimension}
    R:=h^0(\sshf{X}(D))-h^1(\sshf{X}(D))-1+h^1(\sshf{X})
\end{equation}
if and only if $D$ is semi-regular (cf. \cite{KL} or \cite{MU}).
\end{theorem}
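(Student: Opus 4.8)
The plan is to reduce the statement to a Zariski tangent space computation together with an obstruction--vanishing argument, both carried out through the deformation theory of the tautological pair at $[D]$. Recall that $\textrm{Div}(X)$ represents the functor of relative effective Cartier divisors, equivalently of pairs $(\mathcal{L},s)$ consisting of a line bundle and a section cutting out such a divisor; since such a pair has no infinitesimal automorphisms, the local deformation theory of $[D]\in\textrm{Div}(X)$ is that of the pair $(\sshf{X}(D),s_D)$, where $s_D$ is the tautological section. This deformation theory is governed by the two--term complex $K^{\bullet}=[\sshf{X}\xrightarrow{\cdot s_D}\sshf{X}(D)]$ placed in degrees $0$ and $1$: the tangent space is $\mathbb{H}^{1}(K^{\bullet})$ and the obstructions lie in $\mathbb{H}^{2}(K^{\bullet})$. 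Since $s_D$ is a non--zerodivisor, $K^{\bullet}$ is quasi--isomorphic to $\sshf{D}(D)[-1]$, so $T_{[D]}\textrm{Div}(X)\iso H^{0}(\sshf{D}(D))$ and the obstruction space is contained in $H^{1}(\sshf{D}(D))$.

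I would next read off the numerology from the exact sequence $\ses{\sshf{X}}{\sshf{X}(D)}{\sshf{D}(D)}$. As $H^{0}(\sshf{X})=k$, a chase through the associated long exact cohomology sequence yields
\begin{equation*}
 h^{0}(\sshf{D}(D))=h^{0}(\sshf{X}(D))-h^{1}(\sshf{X}(D))-1+h^{1}(\sshf{X})+\dim\ker\partial=R+\dim\ker\partial,
\end{equation*}
where $\partial\colon H^{1}(\sshf{D}(D))\to H^{2}(\sshf{X})$ is precisely the boundary map of Definition \ref{semi-regular line bundle}. Thus $\dim_{k}T_{[D]}\textrm{Div}(X)=R$ exactly when $\partial$ is injective, i.e. exactly when $D$ is semi--regular, and in all cases $\dim_{k}T_{[D]}\textrm{Div}(X)\ge R$.

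For the substantive half I would run the obstruction calculus over a small extension $A'\twoheadrightarrow A$ of Artinian local rings, splitting the extension problem for the pair into: (i) extending the line bundle from $A$ to $A'$, and (ii) extending the section. Step (i) is unobstructed because $\textrm{Pic}(X)$ is smooth --- this is the only use of $\textrm{char}(k)=0$ --- and its solutions form a torsor under $H^{1}(\sshf{X})$. The obstruction in (ii) lies in $H^{1}(\sshf{X}(D))$, but changing the choice in (i) alters it by the image of $H^{1}(\sshf{X})\xrightarrow{\cdot s_D}H^{1}(\sshf{X}(D))$; hence the genuine obstruction to deforming $D$ lives in $\textrm{coker}\bigl(H^{1}(\sshf{X})\xrightarrow{\cdot s_D}H^{1}(\sshf{X}(D))\bigr)$, which the long exact sequence above identifies canonically with $\ker\partial$. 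If $D$ is semi--regular this cokernel vanishes, so $[D]$ is a formally smooth, hence smooth, point of $\textrm{Div}(X)$, necessarily of dimension $\dim_{k}T_{[D]}\textrm{Div}(X)=R$. Conversely, if $\textrm{Div}(X)$ is smooth of dimension $R$ at $[D]$ then $\dim_{k}T_{[D]}\textrm{Div}(X)=R$, and the displayed numerology forces $\ker\partial=0$, i.e. $D$ is semi--regular.

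I expect the main obstacle to be making the previous paragraph rigorous: identifying the obstruction map of the pair functor with the connecting map of the stupid filtration $0\to\sshf{X}(D)[-1]\to K^{\bullet}\to\sshf{X}[0]\to 0$, and verifying that the $H^{2}(\sshf{X})$--component of the obstruction is genuinely annihilated by the smoothness of $\textrm{Pic}(X)$ (and not merely for a fortuitous choice of line bundle lift). This is classical --- it is the Severi--Kodaira--Spencer theorem proper --- so in the paper I would cite \cite{KL} and \cite{MU}; in a self--contained treatment the explicit \v{C}ech cocycle description of $\mathbb{H}^{\bullet}(K^{\bullet})$ supplies the verification, together with the standard fact that a formally smooth point of a scheme locally of finite type over a field is a smooth point.
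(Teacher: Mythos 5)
The paper gives no argument for this classical theorem---it is quoted with references to \cite{KL} and \cite{MU}---and your proposal is exactly the standard proof found in those sources: identify $T_{[D]}\textrm{Div}(X)\iso H^0(\sshf{D}(D))$, compute $h^0(\sshf{D}(D))=R+\dim\ker\partial$, and kill the obstruction to lifting the section by re-choosing the lift of the line bundle (unobstructed since $\textrm{Pic}(X)$ is smooth in characteristic $0$), the ambiguity being precisely the image of $H^1(\sshf{X})\xrightarrow{\cdot s_D}H^1(\sshf{X}(D))$, whose cokernel the long exact sequence identifies with $\ker\partial$. Your outline, including the converse via the tangent-space count at a smooth point, is correct and needs no change beyond the cocycle-level verifications you already flag.
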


\begin{definition}[semi-regular locus]
\begin{equation*}
\textrm{supp}(W^0_{\textrm{sr}}(X))=\{L\in\textrm{Pic}(X)\;|\;  L \;\textrm{is effective and semi-regular} \}.
 \end{equation*}
\end{definition}

\begin{remark}
From Theorem \ref{Severi-Kodaira-Spencer}, we see that ``semi-regular" is an open condition on the locus of effective line bundles $W^0(X)\subset\textrm{Pic}(X)$ and that any connected component of $W^0_{sr}(X)$ is irreducible. Thus each component of $W^0_{\textrm{sr}}(X)$ is a subvariety of $\textrm{Pic}(X)$. It is worth noting that not every irreducible component of $W^0(X)$ contains some semi-regular line bundle, see remark (\ref{non semi-regular locus}) for an example.
\end{remark}

Next we explain the idea of the proof of Theorem \ref{main theorem}. Given $[L]\in\Omega$, which is a component of $W^0_{\textrm{sr}}$, there exists a unique $\Delta_0$ among all irreducible components $\{\Delta_i\}$ of $\textrm{Div}(X)$, such that $\dim{\Delta_0}=R$ and $\varphi^{-1}([L])\subset (\Delta_0)_{reg}\backslash \cup_{i\neq 0} \Delta_i$, where $(\Delta_0)_{reg}$ is the regular locus of $\Delta_0$.
Consider the induced Abel-Jacobi morphism $\varphi: \Delta_0\rightarrow \Omega$. By properness of $\varphi$, there is a smooth neighborhood $U$ of $\varphi^{-1}([L])$ inside $\Delta_0$ such that $\varphi(U)$ is open in $\Omega$ and $\varphi^{-1}\varphi(U)=U$. By abuse of notation, we denote this $U$ by $\textrm{Div}(X)$, hence the  normal sheaf of the fibre by $\shf{N}_{\varphi^{-1}([L]) /{\textrm{Div}(X)}}$ instead of $\shf{N}_{\varphi^{-1}([L])/ U}$, and simply by $\shf{N}$ if the fibre is clear from the context. Under the semi-regularity assumption, $\shf{N}$ can be calculated from the universal family of divisors associated to $|L|$ by base change theorem. It turns out that the vector bundle $\shf{N}^*$ (on the projective space $|L|$) has Castelnuovo-Mumford regularity 0, therefore formal function theorem shows that $R^i\varphi_*(\sshf{\Delta_0})_{[L]}=0$ for $i>0$. Finally, though $\varphi$ is not a resolution of singularities of $\Omega$, as it is not birational in general, Theorem \ref{Kovacs} guarantees that $\Omega$ has at worst rational singularities at $[L]$.

\subsection{A criterion for rational singularities}
The theorem below is a characterization of rational singularities. The original assumption is more general than what we state here.
\begin{theorem}[Kov\'{a}cs \cite{SK}]\label{Kovacs}
Let $f: Y\rightarrow X$ be a surjective proper morphism of varieties. Assume that $Y$ has rational singularities and that $f_*\sshf{Y}\iso \sshf{X}$, $R^i f_*\sshf{Y}=0$ for all $i>0$. Then $X$ has rational singularities.
\end{theorem}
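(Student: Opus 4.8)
The plan is to prove directly that for a resolution of singularities $\pi\colon \widetilde{X}\to X$ the natural morphism $\sshf{X}\to R\pi_*\sshf{\widetilde{X}}$ is an isomorphism in $D^b_{\mathrm{coh}}(X)$; since this already forces $\pi_*\sshf{\widetilde{X}}=\sshf{X}$, hence $X$ normal, and $R^i\pi_*\sshf{\widetilde{X}}=0$ for $i>0$, it is equivalent to $X$ having rational singularities. First we reduce to the case that $Y$ is smooth: replacing $Y$ by a resolution $\mu\colon\widetilde{Y}\to Y$ and using that $Y$ has rational singularities ($\sshf{Y}\iso R\mu_*\sshf{\widetilde{Y}}$), the composite $h:=f\circ\mu\colon\widetilde{Y}\to X$ is proper and surjective with $\widetilde{Y}$ smooth and with unit morphism $\sshf{X}\iso Rh_*\sshf{\widetilde{Y}}$ an isomorphism.

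Next we produce a splitting. Fix $\pi\colon\widetilde{X}\to X$ as above. Because $\pi$ is birational, the fibre of $\widetilde{X}\times_X\widetilde{Y}$ over the generic point of $\widetilde{Y}$ is a single reduced point, so there is a unique irreducible component $W\subset\widetilde{X}\times_X\widetilde{Y}$ dominating $\widetilde{Y}$, and $W\to\widetilde{Y}$ is proper and birational. Let $Z\to W$ be a resolution, yielding a smooth $Z$ with proper morphisms $p\colon Z\to\widetilde{X}$ and $q\colon Z\to\widetilde{Y}$, the latter birational, with $\pi\circ p=h\circ q$. Since $\widetilde{Y}$ is smooth and $q$ is proper birational, $\sshf{\widetilde{Y}}\iso Rq_*\sshf{Z}$, whence
\begin{equation*}
\sshf{X}\iso Rh_*\sshf{\widetilde{Y}}\iso Rh_*Rq_*\sshf{Z}=R(\pi p)_*\sshf{Z}=R\pi_*\bigl(Rp_*\sshf{Z}\bigr),
\end{equation*}
and, chasing the unit morphisms, this isomorphism equals $R\pi_*(u_p)\circ u_\pi$, where $u_\pi\colon\sshf{X}\to R\pi_*\sshf{\widetilde{X}}$ and $u_p\colon\sshf{\widetilde{X}}\to Rp_*\sshf{Z}$ are the natural maps. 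Hence $u_\pi$ is a split monomorphism in $D^b_{\mathrm{coh}}(X)$.

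It remains to upgrade a split monomorphism to an isomorphism, and here we invoke Grothendieck duality together with Grauert--Riemenschneider vanishing (this is where $\mathrm{char}\,k=0$ enters). Write $R\pi_*\sshf{\widetilde{X}}\iso\sshf{X}\oplus C$ in $D^b_{\mathrm{coh}}(X)$. Applying the duality functor $\mathbb{D}_X(-)=R\mathcal{H}om_X(-,\omega_X^\bullet)$, using $\pi^!\omega_X^\bullet\iso\omega_{\widetilde{X}}[n]$ with $n=\dim X$ and $R^i\pi_*\omega_{\widetilde{X}}=0$ for $i>0$, one obtains
\begin{equation*}
(\pi_*\omega_{\widetilde{X}})[n]\iso\mathbb{D}_X\bigl(R\pi_*\sshf{\widetilde{X}}\bigr)\iso\omega_X^\bullet\oplus\mathbb{D}_X(C).
\end{equation*}
Thus both $\omega_X^\bullet$ and $\mathbb{D}_X(C)$ are sheaves placed in cohomological degree $-n$; in particular $X$ is Cohen--Macaulay, $\omega_X^\bullet=\omega_X[n]$, and $\omega_X$ is a direct summand of $\pi_*\omega_{\widetilde{X}}$. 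But the trace map $\pi_*\omega_{\widetilde{X}}\to\omega_X$ is injective, being a generically isomorphic map out of a torsion-free sheaf; writing $\pi_*\omega_{\widetilde{X}}=\omega_X\oplus G$ and comparing ranks at the generic point forces $G$ to be torsion, hence $G=0$ as a summand of the torsion-free sheaf $\pi_*\omega_{\widetilde{X}}$. Therefore $\mathbb{D}_X(C)=0$, so $C=0$ and $\sshf{X}\iso R\pi_*\sshf{\widetilde{X}}$, i.e.\ $X$ has rational singularities.

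The main obstacle is the last paragraph: the duality computation must be carried out with care about the normalizations of the dualizing complexes, and one must reason about $\omega_X^\bullet$ \emph{before} knowing that $X$ is Cohen--Macaulay. The remaining delicate point is the compatibility, in the middle paragraph, of the abstract isomorphism $\sshf{X}\iso R(\pi p)_*\sshf{Z}$ with the unit morphisms, which is what legitimizes the split-monomorphism conclusion. Both are routine given the machinery, and the essential input is Grauert--Riemenschneider vanishing.
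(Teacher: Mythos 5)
The paper does not prove this statement at all: it is quoted from Kov\'acs \cite{SK} and used as a black box, so the only meaningful comparison is with the cited source, and your argument is in substance Kov\'acs's original one (reduce to smooth $Y$; dominate a resolution $\pi\colon\widetilde{X}\to X$ and $\widetilde{Y}$ by a common smooth $Z$ so that functoriality of the unit maps exhibits $\sshf{X}\to R\pi_*\sshf{\widetilde{X}}$ as a split monomorphism; then kill the complement by Grothendieck duality and Grauert--Riemenschneider vanishing). I find your proof correct. The two points you flag as delicate are indeed routine: the identity $u_{\pi p}=R\pi_*(u_p)\circ u_\pi=Rh_*(u_q)\circ u_h$ is the standard compatibility of units with composition of pushforwards, and with the normalization $\omega_X^\bullet=p_X^!k$ one has $\pi^!\omega_X^\bullet\cong\omega_{\widetilde{X}}[n]$ at once (since $\pi$ is birational), so concluding from $(\pi_*\omega_{\widetilde{X}})[n]\cong\omega_X^\bullet\oplus\mathbb{D}_X(C)$ that both summands are sheaves in degree $-n$, hence that $X$ is Cohen--Macaulay, is legitimate without any a priori Cohen--Macaulay assumption. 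One small tidy-up: the sentence about injectivity of the trace map is not actually used; what does the work is that $\pi_*\omega_{\widetilde{X}}$ is torsion-free of generic rank one, so in $\pi_*\omega_{\widetilde{X}}\cong\omega_X\oplus G$ the summand $G$ is both torsion and torsion-free, hence zero, and then $C\cong\mathbb{D}_X\mathbb{D}_X(C)=0$; this also yields $\pi_*\sshf{\widetilde{X}}=\sshf{X}$, i.e.\ normality of $X$, as a byproduct. Note finally that your reduction step reads the hypothesis $f_*\sshf{Y}\iso\sshf{X}$ as saying the natural map is an isomorphism, which is the intended reading and is how the paper itself verifies the hypothesis in the proof of Theorem \ref{main theorem}.
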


Based on the above, we obtain Theorem \ref{rational}, which is more general than what we actually need to prove Theorem \ref{main theorem} and may be applied to other problems.

\begin{lemma}\label{proper line bundle}
Let $X$ be a smooth projective variety of dimension $d$ with $H^1(X, \sshf{X})=0$, and $L$ a globally generated ample line bundle on $X$ with the property that
$K_X+(d-1)L$ is noneffective. Then any $m$-regular (with respect to $L$) coherent sheaf $\shf{F}$ admits a locally free resolution of the form:
\begin{equation*}
    \cdots\rightarrow V_i\otimes L^{-(m+i)}\cdots\rightarrow V_1\otimes L^{-(m+1)}\rightarrow V_0\otimes L^{-m}\rightarrow\shf{F}\rightarrow 0,
\end{equation*}
where each $V_i$ is a finite dimensional vector space. Consequently if $\shf{F}$ is locally free and $0$-regular, then any symmetric power of $\shf{F}$ is $0$-regular.
\end{lemma}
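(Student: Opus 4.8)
The plan is to build the resolution one syzygy at a time, splitting off a twist of a trivial bundle at each stage, and then to derive the symmetric-power statement from the shape of the resolution together with a multiplicativity property of $0$-regularity.

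For the resolution: since $\mathcal{F}$ is $m$-regular and $L$ is globally generated and ample, Mumford's regularity lemma gives that $\mathcal{F}\otimes L^{m}$ is globally generated, so evaluation of global sections is a surjection $V_{0}\otimes L^{-m}\twoheadrightarrow\mathcal{F}$ with $V_{0}=H^{0}(X,\mathcal{F}\otimes L^{m})$ a finite-dimensional vector space. Let $\mathcal{F}_{1}=\ker(V_{0}\otimes L^{-m}\to\mathcal{F})$; the construction then repeats with $(m,\mathcal{F})$ replaced by $(m+1,\mathcal{F}_{1})$, and splicing the resulting short exact sequences produces the claimed resolution, once we check that $\mathcal{F}_{1}$ is $(m+1)$-regular. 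For this, twist $0\to\mathcal{F}_{1}\to V_{0}\otimes L^{-m}\to\mathcal{F}\to 0$ by $L^{(m+1)-i}$: in the long exact cohomology sequence $H^{i}(\mathcal{F}_{1}\otimes L^{(m+1)-i})$ sits between $H^{i-1}(\mathcal{F}\otimes L^{m-(i-1)})$ and $V_{0}\otimes H^{i}(X,L^{1-i})$. For $i\ge 2$ the first group vanishes by $m$-regularity of $\mathcal{F}$, and $H^{i}(X,L^{1-i})\cong H^{d-i}(X,\omega_{X}\otimes L^{i-1})^{\vee}$ vanishes by Kodaira vanishing when $i<d$, by the hypothesis that $K_{X}+(d-1)L$ is noneffective when $i=d$, and trivially for $i>d$. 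For $i=1$ one uses instead that $V_{0}=H^{0}(V_{0}\otimes\mathcal{O}_{X})\to H^{0}(\mathcal{F}\otimes L^{m})=V_{0}$ is an isomorphism, so $H^{1}(\mathcal{F}_{1}\otimes L^{m})$ injects into $V_{0}\otimes H^{1}(X,\mathcal{O}_{X})=0$.

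For the consequence, I would first isolate the following multiplicativity statement: if $\mathcal{A}$ is locally free and $0$-regular and $\mathcal{B}$ is $0$-regular, then $\mathcal{A}\otimes\mathcal{B}$ is $0$-regular. To prove it, apply the first part of the lemma with $m=0$ to $\mathcal{B}$, obtaining a resolution $\cdots\to W_{j}\otimes L^{-j}\to\cdots\to W_{0}\otimes\mathcal{O}_{X}\to\mathcal{B}\to 0$, and tensor it by the locally free sheaf $\mathcal{A}$, which preserves exactness. Writing $\mathcal{B}_{0}=\mathcal{B}$ and letting $\mathcal{B}_{1},\mathcal{B}_{2},\dots$ be the syzygies, the sequences $0\to\mathcal{A}\otimes\mathcal{B}_{j+1}\to\mathcal{A}\otimes W_{j}\otimes L^{-j}\to\mathcal{A}\otimes\mathcal{B}_{j}\to 0$, twisted by $L^{-i}$, have the term $H^{i+j}(\mathcal{A}\otimes W_{j}\otimes L^{-(i+j)})=W_{j}\otimes H^{i+j}(\mathcal{A}\otimes L^{-(i+j)})$ to the left of $H^{i+j}(\mathcal{A}\otimes\mathcal{B}_{j}\otimes L^{-i})$; this vanishes because $\mathcal{A}$ is $0$-regular and $i+j\ge 1$, so $H^{i+j}(\mathcal{A}\otimes\mathcal{B}_{j}\otimes L^{-i})$ injects into $H^{i+j+1}(\mathcal{A}\otimes\mathcal{B}_{j+1}\otimes L^{-i})$. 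Iterating from $j=0$ and using that cohomology above degree $\dim X$ vanishes gives $H^{i}(\mathcal{A}\otimes\mathcal{B}\otimes L^{-i})=0$ for all $i\ge 1$. Granting this, the symmetric-power claim follows by induction on $a$: the case $a=1$ is the hypothesis, and since $\operatorname{char}k=0$ the multiplication $\mathcal{F}\otimes\operatorname{Sym}^{a}\mathcal{F}\to\operatorname{Sym}^{a+1}\mathcal{F}$ is split by a partial polarization, so $\operatorname{Sym}^{a+1}\mathcal{F}$ is a direct summand of $\mathcal{F}\otimes\operatorname{Sym}^{a}\mathcal{F}$; the latter is $0$-regular by the multiplicativity statement ($\mathcal{A}=\mathcal{F}$ locally free and $0$-regular, $\mathcal{B}=\operatorname{Sym}^{a}\mathcal{F}$ $0$-regular by the inductive hypothesis), and a direct summand of a $0$-regular sheaf is clearly $0$-regular.

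The one delicate point is the top-degree cohomology. In the resolution step it is exactly the case $i=\dim X$ that forces the hypothesis on $K_{X}+(d-1)L$—everything in lower degrees is handled by Kodaira vanishing—and this is also why I route the symmetric-power statement through the multiplicativity lemma rather than applying $\operatorname{Sym}^{a}$ directly to the resolution of $\mathcal{F}$. The latter would produce a resolution of $\operatorname{Sym}^{a}\mathcal{F}$ whose terms are again twists $(\text{vector space})\otimes L^{-j}$, for which the controlling group $H^{\dim X}(X,L^{-\dim X})$ need not vanish (already $\mathcal{O}_{X}$ itself need not be $0$-regular), whereas in the multiplicativity chase that group is replaced by $H^{i+j}(\mathcal{A}\otimes L^{-(i+j)})$, which dies for free from the $0$-regularity of $\mathcal{A}=\mathcal{F}$. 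A secondary technical point is ensuring Mumford's regularity lemma is invoked in the form valid for a merely globally generated polarization $L$.
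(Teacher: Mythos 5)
Your proposal is correct and is essentially the paper's argument in expanded form: the vanishings $H^i(X,L^{1-i})=0$ that you verify (Serre duality plus Kodaira vanishing in middle degrees, the noneffectivity of $K_X+(d-1)L$ in top degree, and $H^1(X,\mathcal{O}_X)=0$ in degree one) are exactly the paper's observation that $\mathrm{reg}(\mathcal{O}_X)\le 1$, and your iterated-kernel construction of the resolution is precisely the content of the result the paper then cites (Lazarsfeld, Remark 1.8.16). Your multiplicativity lemma together with the characteristic-zero splitting of $\mathrm{Sym}^{a+1}\mathcal{F}$ as a direct summand of $\mathcal{F}\otimes\mathrm{Sym}^{a}\mathcal{F}$ correctly supplies the ``consequently'' clause, which the paper likewise leaves to the citation.
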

\begin{proof}
By Kodaira vanishing and the assumption $H^1(X, \sshf{X})=1$, the condition that $K_X\otimes L^{d-1}$ is noneffective implies that reg(${\sshf{X}})\le 1$. Then the result follows from \cite{LA} Remark 1.8.16.
\end{proof}
\begin{remark}
Notice that the existence of such $L$ in Lemma \ref{proper line bundle} imposes a strong restriction on $X$: $-K_X$ is big. Examples for Fano varieties are  $\prj{d}$, quadric hypersurface $Q\subset\prj{d+1}$, and $\mathbb{P}(\sshf{\prj{1}}^{\oplus (d-1)}\oplus\sshf{\prj{1}}(1))$ (with Fano index $1$). If $-K_X$ is also nef, the assumption $H^1(X, \sshf{X})=0$ is redundant by Kawamata-Viehweg vanishing.
\end{remark}

\begin{theorem}\label{rational}
Let $f: Y\rightarrow X$ be a projective morphism from a smooth variety $Y$ onto a normal variety $X$. Let $p\in X$ be a closed point. Suppose
the scheme-theoretic fiber $F$ is a smooth variety of dimension $d$ with $H^i(F, \sshf{F})=0$ for all $i>0$, and the conormal sheaf $\shf{N}^* _{F/Y}$ is 0-regular with respect to a globally generated ample line bundle $L$ on $F$, such that $K_F+(d-1)L$ is noneffective. Then $X$ has rational singularities in a neighbourhood of $p$.
\end{theorem}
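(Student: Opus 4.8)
The strategy is to apply Kov\'acs's criterion (Theorem \ref{Kovacs}) to the morphism $f$ restricted to a suitable neighbourhood of $p$. First I would shrink $X$ to an affine (or at least small) open neighbourhood $X'$ of $p$ so that $f^{-1}(X')$ retracts onto the central fibre $F$ in the formal sense; since $Y$ is smooth it has rational singularities, and $X$ is normal by hypothesis, so the only thing left to verify is $f_*\sshf{Y} \iso \sshf{X}$ and $R^i f_*\sshf{Y} = 0$ for $i > 0$ in a neighbourhood of $p$. By the theorem on formal functions, the stalk $(R^i f_*\sshf{Y})^{\wedge}_p$ is computed by $\varprojlim_n H^i(Y_n, \sshf{Y_n})$, where $Y_n$ is the $n$th infinitesimal neighbourhood of $F$ in $Y$. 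So it suffices to control $H^i(Y_n,\sshf{Y_n})$ for all $n$ and all $i>0$, and to show the $n=0$ term gives $H^0 = k$ (using $F$ connected, which follows from $H^0(F,\sshf{F})=k$, itself implicit once we know $F$ is a variety).

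The heart of the argument is the filtration of $\sshf{Y_n}$ by powers of the ideal sheaf $\is{F}$ of $F$ in $Y$. One has short exact sequences
\begin{equation*}
0 \rightarrow \is{F}^j/\is{F}^{j+1} \rightarrow \sshf{Y_{j}} \rightarrow \sshf{Y_{j-1}} \rightarrow 0,
\end{equation*}
and since $F$ is smooth (hence a regular embedding, as $Y$ is smooth too), the conormal graded pieces are $\is{F}^j/\is{F}^{j+1} \iso \textrm{Sym}^j(\shf{N}^*_{F/Y})$. So by an induction on $n$ and the long exact cohomology sequences, it is enough to prove
\begin{equation*}
H^i\paren{F, \textrm{Sym}^j\paren{\shf{N}^*_{F/Y}}} = 0 \qquad \text{for all } i > 0,\ j \geq 0.
\end{equation*}
For $j=0$ this is the hypothesis $H^i(F,\sshf{F})=0$. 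For $j \geq 1$ I would invoke Lemma \ref{proper line bundle}: the conormal sheaf $\shf{N}^*_{F/Y}$ is locally free (regular embedding) and $0$-regular with respect to the globally generated ample $L$ on $F$, and $K_F+(d-1)L$ is noneffective, so by the Lemma every symmetric power $\textrm{Sym}^j(\shf{N}^*_{F/Y})$ is again $0$-regular. A $0$-regular sheaf (with respect to an ample globally generated bundle) is in particular globally generated and has $H^i(F,\textrm{Sym}^j(\shf{N}^*_{F/Y})(-i)) = 0$ for $i>0$; taking the twist to be trivial, $0$-regularity of $\textrm{Sym}^j(\shf{N}^*_{F/Y})\otimes L$ already gives $H^i$ of its untwist vanishing, but more directly one uses that $0$-regularity of $\shf{G}$ forces $H^i(F, \shf{G}) = 0$ for $i \geq 1$ — indeed $\textrm{reg}(\shf{G}) \le 0$ means $H^i(F, \shf{G}\otimes L^{-i}) = 0$ for $i>0$, and combined with Castelnuovo--Mumford's lemma that regularity propagates upward one gets $H^i(F,\shf{G}) = 0$ for all $i \ge 1$. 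This kills all the higher cohomology of the graded pieces, and feeds the induction.

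Assembling: the vanishing of $H^i$ of every graded piece gives, via the filtration exact sequences, $H^i(Y_n,\sshf{Y_n}) = 0$ for all $i>0$ and all $n$, hence $(R^if_*\sshf{Y})^{\wedge}_p = 0$ for $i>0$, so $R^if_*\sshf{Y}$ vanishes near $p$. The same exact sequences in degree $0$ show $H^0(Y_n,\sshf{Y_n}) = H^0(F,\sshf{F}) = k$ for all $n$ (the $\textrm{Sym}^j$ pieces for $j\geq 1$ being globally generated but contributing $0$ to the kernel computation once we know $H^0(Y_{j-1},\sshf{Y_{j-1}})=k$ surjects — here one checks the connecting map behaviour), so $f_*\sshf{Y}$ is a finite sheaf of rings with stalk at $p$ a local ring whose completion is $k\llbracket \text{nothing}\rrbracket$-... more precisely, $f_*\sshf{Y}$ is finite over $\sshf{X}$, birational onto its image, with the same completed stalk as $\sshf{X}$ at $p$; since $X$ is normal, $f_*\sshf{Y} \iso \sshf{X}$ near $p$. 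Then Theorem \ref{Kovacs} applies and $X$ has rational singularities near $p$. The main obstacle is the bookkeeping around $f_*\sshf{Y}\iso\sshf{X}$: one must argue that $f$ has connected fibres near $p$ (or at least that the Stein factorization is an isomorphism there), which is where normality of $X$ together with $H^0(F,\sshf{F})=k$ does the work, and one must be careful that the formal-functions computation is legitimately identifying the stalk, not just its completion — this is fine because $R^if_*\sshf{Y}$ is coherent and a coherent sheaf whose stalk has zero completion at $p$ vanishes in a neighbourhood of $p$.
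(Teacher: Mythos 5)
Your treatment of the vanishing of the higher direct images is essentially the paper's own argument: filter $\sshf{nF}$ by powers of the ideal sheaf $\is{}$ of $F$, identify $\is{}^j/\is{}^{j+1}$ with $S^j(\shf{N}^*_{F/Y})$ (a regular embedding, $F$ and $Y$ being smooth), invoke Lemma \ref{proper line bundle} to make every symmetric power $0$-regular, deduce vanishing of all their higher cohomology, run the induction to get $H^i(\sshf{nF})=0$ for $i>0$, apply the formal function theorem, and spread out by coherence of $R^if_*\sshf{Y}$. That part is correct and matches the paper.

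The degree-zero part, however, contains a genuine error. From $0\rightarrow \is{}^n/\is{}^{n+1}\rightarrow\sshf{(n+1)F}\rightarrow\sshf{nF}\rightarrow 0$, the kernel of $H^0(\sshf{(n+1)F})\rightarrow H^0(\sshf{nF})$ is $H^0\paren{S^n(\shf{N}^*_{F/Y})}$, and this is \emph{not} zero: a $0$-regular sheaf with respect to a globally generated ample line bundle is globally generated, so a nonzero conormal bundle has many sections (in the paper's application these sections are precisely $\bar{\frak{m}}^n/\bar{\frak{m}}^{n+1}$, cf.\ Lemma \ref{alpha map}). Hence $H^0(\sshf{nF})\neq k$ for $n\geq 1$, the limit $\varprojlim H^0(\sshf{nF})$ is the full completed stalk $(f_*\sshf{Y})^{\wedge}_p$, which contains $\sshf{X,p}^{\wedge}$, and your claim that this completion is $k$ cannot hold unless $X$ is a point; so the route you spell out for $f_*\sshf{Y}\iso\sshf{X}$ collapses. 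The correct argument is the one you only gesture at in your closing sentence, and it is what the paper does: take the Stein factorization $Y\xrightarrow{f'}Y'\xrightarrow{g}X$ with $g$ finite; since the scheme-theoretic fibre $F$ is a smooth variety (reduced and connected), $g^{-1}(p)$ must be a single reduced point, so $g$ is generically one-to-one, hence birational, hence an isomorphism because $X$ is normal; therefore $f$ has connected fibres and $f_*\sshf{Y}\iso\sshf{X}$ follows at once, with no formal computation in degree zero. With that replacement, the rest of your argument goes through and Theorem \ref{Kovacs} applies as you intend.
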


\begin{proof}
Consider the Stein factorization of $f: Y\xrightarrow{f'} Y'\xrightarrow{g} X$, where $f'$ is projective with connected fibers, and $g$ is a finite morphism. Then $Z:=g^{-1}(p)$ is a reduced closed point, for otherwise $F=f'^{-1}(Z)$ would be nonreduced. Therefore $g$ is generically one to one map, and hence birational. Since $X$ is normal, we have $g$ is an isomorphism, and hence $f$ has connected fibres.

Let $\is{}$ be the ideal sheaf of $F$ in $Y$. By Lemma \ref{proper line bundle}, any symmetric power $S^n(\shf{N}^*_{F/Y})\iso\is{}^n/{\is{}^{n+1}}$ is $0$-regular. In particular, all its higher cohomologies vanish. From the exact sequence
\begin{equation*}
0\rightarrow \is{}^n/\is{}^{n+1}\rightarrow \mathcal{O} _{(n+1)F}\rightarrow \mathcal{O} _{nF}\rightarrow 0,
\end{equation*}
we get $H^i({O} _{(n+1)F})\cong H^i({O} _{nF})$ for all $i,n>0$.
Then we conclude that
$H^i({O} _{nF})=0$ for all $i>0$.
So $R^i f_*(\mathcal{O} _Y)^{\wedge}_p=0$ for all $i>0$, by the formal function theorem (cf. \cite[III 11.1]{HA}). Since the support of the coherent sheaf $R^i f_*(\mathcal{O} _Y)$ is closed, by shrinking $X$, we can assume $R^i f_*(\mathcal{O} _Y)=0$ on X for $i>0$.

It's clear that $f_*\mathcal{O}_Y=\mathcal{O}_X$, since $X$ is normal and fibers are connected.
At this point, we apply Theorem \ref{Kovacs} to conclude the proof.
\end{proof}

\subsection{Conormal sheaf of the fibre of $\varphi$}\mbox{}\\

In the rest of \S 2, $L$ stands for a line bundle on $X$ with $r=\dim{|L|}$ and $b=h^1(X, L)$, $F$ denotes the fibre $\varphi^{-1}([L])\iso |L|$ of the Abel-Jacobi map $\varphi: \textrm{Div}(X)\rightarrow\textrm{Pic}(X)$. Let $\frak{m}$ be the maximal ideal of $\sshf{\textrm{Pic}(X), [L]}$, $\bar{\frak{m}}$ the maximal ideal of $\sshf{W^0_{\textrm{sr}}, [L]}$, and $\is{}$ the ideal sheaf of $F$ in $\textrm{Div}(X)$.

Given an effective line bundle $L$, $|L|\iso \mathbb{P}(H^0(X, L)^*)$. Let $Y=X\times|L|$ and $p, q$ be the two projections. By K$\ddot{\textrm{u}}$nneth formula, $\Gamma(Y, p^*L\otimes q^*\sshf{}(1))\iso H^0(X, L)\otimes H^0(X, L)^*$. Fix a basis of the vector space $H^0(X, L)$, say $x_0, \cdots, x_r$. The canonical section $\displaystyle {s=\sum^r_{i=0} x_i\otimes x^*_i}$ defines a relative Cartier divisor of $Y$ over $|L|$ via
\begin{equation}\label{universal family}
    0\rightarrow{\sshf{Y}}\xrightarrow{.s}{p^*L\otimes q^*\sshf{}(1)}\rightarrow{\sshf{\mathscr{D}}(\mathscr{D})}\rightarrow 0.
\end{equation}

Denote the two induced projections from $\mathscr{D}$ to $X$ and $|L|$ also by $p$ and $q$. The divisor $\mathscr{D}$ is actually an incidence correspondence in the sense: for any $x\in X$, $p^{-1}(x)$ parameterizes the effective divisors passing through $x$; for any $[D]\in |L|$, $q^{-1}([D])$ is precisely the divisor $D$.
\vspace{0.3cm}

By the universal property of $\textrm{Div}(X)$, there is a unique morphism $j: |L|\rightarrow \textrm{Div}(X)$ such that $\mathscr{D}=j^*\mathscr{U}$, where $\mathscr{U}$ is the universal divisor over $\textrm{Div}(X)$, see the Cartesian diagram below. In fact $j$ is a closed immersion.
$$\xymatrix{
D \ar@{^{(}->}[d] \ar@{^{(}->}[r]  &\mathscr{D} \ar@{^{(}->}[d] \ar@{^{(}->}[r]      & \mathscr{U}\ar@{^{(}->}[d]\\
X \ar[d] \ar@{^{(}->}[r]           &X\times|L| \ar[d]^{q} \ar@{^{(}->}[r]^{j'}        & X\times\textrm{Div}(X) \ar[d]^{\pi}\\
[D]\ar@{^{(}->}[r]                 &|L| \ar@{^{(}->}[r]^{j}                          & \textrm{Div}(X)}$$

Suppose $L$ is semi-regular, then there is a smooth open neighborhood $V$ of $|L|$ in $\text{Div}(X)$, and hence $\shf{T}_\text{Div(X)}\big|_V$, the restriction of the tangent sheaf of $\text{Div}(X)$ to $V$, is locally free.

Denoting the projections from $X\times\textrm{Div}(X)$ by $\pi'$ and $\pi$ respectively, we have the natural morphism
\begin{equation}\label{natural morphism}
    \shf{T}_\text{Div(X)}\big|_{V}\iso \pi_*\paren{\pi^*\shf{T}_\text{Div(X)}\big|_{V}} \hookrightarrow \pi_*\paren{\pi^*\shf{T}_\text{Div(X)}\big|_{V}\oplus {\pi'}^*\shf{T}_X}\rightarrow \paren{\pi_*\sshf{\mathscr{U}}(\mathscr{U})}\big|_V.
\end{equation}
On the other hand, by Grauert's theorem, for all $[D]\in V$,
\begin{equation}\label{iso at stalks}
    \pi_*\sshf{\mathscr{U}}(\mathscr{U})\otimes\kappa([D])\iso H^0(X, \shf{N}_{D/X})\iso\shf{T}_\textrm{Div(X)}\otimes\kappa([D]),
\end{equation}
where $\kappa([D])$ is the residue field at $[D]$ and the last isomorphism follows from the property of $\pi$.

In view of (\ref{natural morphism}) and (\ref{iso at stalks}), we get an isomorphism
\begin{equation*}
    \paren{\pi_*\sshf{\mathscr{U}}(\mathscr{U})}\big|_V\iso\shf{T}_\textrm{Div(X)}\big|_V.
\end{equation*}

Then the base change theorem (cf. \cite[Lecture 7]{MU}) implies that
\begin{equation*}
    j^*\pi_*\sshf{\mathscr{U}}(\mathscr{U})\iso q_*{j'}^*\sshf{\mathscr{U}}(\mathscr{U}).
\end{equation*}

Since ${j'}^*\sshf{\mathscr{U}}(\mathscr{U})\iso\sshf{\mathscr{D}}(\mathscr{D})$, one has
\begin{equation}\label{pull back of normal sheaf}
    q_*{\sshf{\mathscr{D}}(\mathscr{D})}\iso\shf{T}_\text{Div(X)}\big |_{|L|}.
\end{equation}

The key theorem below is a generalization of \cite[Theorem 1.1]{Ein}.

\begin{theorem}\label{conormal sheaf}
With notation as above, let $L\in W^0_{sr}$ and $\shf{N}^*$ be the conormal sheaf of $F$ in $\textrm{Div}(X)$. Then there is an exact sequence
\begin{equation}\label{resolution of conormal sheaf}
    \ses{H^1(L)^*\otimes\sshf{F}(-1)}{H^1(\sshf{X})^*\otimes\sshf{F}}{\shf{N}^*}.
\end{equation}
\end{theorem}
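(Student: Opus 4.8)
The plan is to extract the exact sequence (\ref{resolution of conormal sheaf}) from the defining sequence (\ref{universal family}) of the universal divisor, together with the identification (\ref{pull back of normal sheaf}) of the pushforward $q_*\sshf{\mathscr{D}}(\mathscr{D})$ with $\shf{T}_{\textrm{Div}(X)}|_{|L|}$. First I would tensor the short exact sequence (\ref{universal family}) on $Y=X\times|L|$ by $q^*\sshf{}(-1)$ and push forward along $q$. Since $p^*L\otimes q^*\sshf{}(1)\otimes q^*\sshf{}(-1)\iso p^*L$, the projection formula gives $q_*(p^*L)\iso H^0(X,L)\otimes\sshf{|L|}$ and $R^1q_*(p^*L)\iso H^1(X,L)\otimes\sshf{|L|}$, while $R^iq_*\sshf{Y}(-1)=0$ for all $i$ because the fibres are projective spaces of dimension $r=\dim|L|\ge 1$ (and $q_*\sshf{Y}(-1)=0$ as well). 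Pushing forward $0\to\sshf{Y}(-1)\xrightarrow{\cdot s}p^*L\to\sshf{\mathscr{D}}(\mathscr{D})\otimes q^*\sshf{}(-1)\to 0$ therefore yields
\begin{equation*}
0\to H^0(X,L)\otimes\sshf{|L|}\to q_*\big(\sshf{\mathscr{D}}(\mathscr{D})\otimes q^*\sshf{}(-1)\big)\to H^1(X,L)\otimes\sshf{|L|}\to 0,
\end{equation*}
and by the projection formula the middle term is $\big(q_*\sshf{\mathscr{D}}(\mathscr{D})\big)\otimes\sshf{}(-1)$.

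Next I would identify $q_*\sshf{\mathscr{D}}(\mathscr{D})$ via (\ref{pull back of normal sheaf}) with $\shf{T}_{\textrm{Div}(X)}|_{|L|}$, and combine the displayed sequence with the standard conormal/tangent sequence of the smooth subvariety $F\iso|L|$ inside the smooth variety $\textrm{Div}(X)$ (shrunk to the neighbourhood $V$ where it is smooth), namely
\begin{equation*}
0\to\shf{T}_F\to\shf{T}_{\textrm{Div}(X)}|_F\to\shf{N}_{F/\textrm{Div}(X)}\to 0.
\end{equation*}
The point is that $\shf{T}_F\iso\shf{T}_{|L|}$ should match, under the isomorphism (\ref{pull back of normal sheaf}), exactly with the image $H^0(X,L)\otimes\sshf{|L|}(-1)$ twisted back up; concretely, after twisting the Euler sequence $0\to\sshf{|L|}\to H^0(X,L)\otimes\sshf{|L|}(1)\to\shf{T}_{|L|}\to 0$ one sees that $\shf{T}_{|L|}(-1)$ is the cokernel of $\sshf{|L|}(-1)\to H^0(X,L)\otimes\sshf{|L|}$, which is compatible with the $\cdot s$ map above. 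Granting this compatibility, dualizing and twisting, $\shf{N}^*_{F/\textrm{Div}(X)}$ fits into
\begin{equation*}
0\to H^1(X,L)^*\otimes\sshf{F}(-1)\to \shf{T}_{\textrm{Div}(X)}^*|_F(-1)\to\shf{N}^*\to 0,
\end{equation*}
and it remains to check $\shf{T}_{\textrm{Div}(X)}^*|_F(-1)\iso H^1(\sshf{X})^*\otimes\sshf{F}$, i.e. $q_*\sshf{\mathscr{D}}(\mathscr{D})\iso H^1(X,\sshf{X})^*\otimes\sshf{|L|}(1)\oplus(\text{rank }r\text{ piece})$ in a way that, after twisting by $\sshf{}(-1)$ and removing the $\shf{T}_{|L|}(-1)$ summand, leaves $H^1(\sshf{X})^*\otimes\sshf{F}$. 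Here the semi-regularity hypothesis enters decisively: the cohomology of $\sshf{D}(D)$ must be computed from the long exact sequence of $0\to\sshf{X}\to\sshf{X}(D)\to\sshf{D}(D)\to 0$, and the injectivity of $\partial\colon H^1(\sshf{D}(D))\to H^2(\sshf{X})$ is what controls the rank and ensures the relevant higher pushforwards are locally free of the expected rank, so that the sequence is exact on the nose rather than merely a complex.

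The main obstacle I anticipate is precisely this last bookkeeping: pinning down $q_*\sshf{\mathscr{D}}(\mathscr{D})$ finely enough — not just its rank but its sheaf structure near $[L]$ — to see the $H^1(\sshf{X})^*$ factor split off, and checking that the map $H^1(X,L)^*\otimes\sshf{F}(-1)\to\shf{T}^*_{\textrm{Div}(X)}|_F(-1)$ obtained by dualizing is genuinely injective with locally free cokernel. This is where one needs base change (Grauert's theorem, as in (\ref{iso at stalks})) applied not to $\sshf{\mathscr{U}}(\mathscr{U})$ itself but to its twists by $q^*\sshf{}(-1)$, and where semi-regularity guarantees the dimensions of $H^i(X,\sshf{X}(D))$ and $H^i(\sshf{D}(D))$ stay constant over $|L|$. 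Everything else — the Euler sequence manipulation, the projection formula, the conormal exact sequence — is formal once the middle terms are correctly identified, so I would organize the write-up as: (1) push forward (\ref{universal family})$\otimes q^*\sshf{}(-1)$; (2) invoke (\ref{pull back of normal sheaf}) and Grauert to fix the middle term; (3) splice with the conormal sequence and dualize; (4) use semi-regularity to identify $H^0,H^1$ of $\sshf{X}(D)$ and conclude exactness.
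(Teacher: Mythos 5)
There is a genuine gap at the very first step of your plan. You tensor (\ref{universal family}) by $q^*\sshf{}(-1)$ and claim $R^iq_*\bigl(q^*\sshf{}(-1)\bigr)=0$ for all $i$ ``because the fibres are projective spaces of dimension $r$''. But $q$ is the projection $X\times|L|\to|L|$: its fibres are copies of $X$, not of $|L|$, and $q^*\sshf{}(-1)$ restricts trivially to each fibre. By the projection formula, $R^iq_*\bigl(q^*\sshf{}(-1)\bigr)\iso H^i(X,\sshf{X})\otimes\sshf{|L|}(-1)$, which is nonzero for $i=0$ and also for $i=1$ whenever $h^1(\sshf{X})>0$ (the only interesting case). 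So your first displayed short exact sequence is false: twisting by $q^*\sshf{}(-1)$ gains nothing, since pushing forward the twisted sequence just reproduces the pushforward of the untwisted one, twisted by $\sshf{}(-1)$. The vanishing you invoke would be valid for the other projection $p$, but that produces sheaves on $X$ rather than on $F=|L|$, which is useless here. A second, related problem appears at the end: you hope to identify $\shf{T}^*_{\mathrm{Div}(X)}\big|_F(-1)$ with $H^1(\sshf{X})^*\otimes\sshf{F}$ via a direct-sum decomposition of $q_*\sshf{\mathscr{D}}(\mathscr{D})$ with an $H^1(\sshf{X})^*\otimes\sshf{|L|}(1)$ summand; no such isomorphism or splitting holds in general, nor is one needed.

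The repair is exactly the paper's argument, and your remaining ingredients are the right ones: push forward the \emph{untwisted} sequence (\ref{universal family}) along $q$. Using K\"unneth and the identification (\ref{pull back of normal sheaf}) of $q_*\sshf{\mathscr{D}}(\mathscr{D})$ with $\shf{T}_{\mathrm{Div}(X)}\big|_F$, this gives the five-term exact sequence $0\to\sshf{F}\to H^0(L)\otimes\sshf{F}(1)\to\shf{T}_{\mathrm{Div}(X)}\big|_F\to H^1(\sshf{X})\otimes\sshf{F}\to H^1(L)\otimes\sshf{F}(1)$, and semi-regularity enters precisely to show the last map is surjective: fibrewise, injectivity of $\partial\colon H^1(\sshf{D}(D))\to H^2(\sshf{X})$ forces $H^1(\sshf{X})\to H^1(\sshf{X}(D))$ to be surjective, and Grauert's theorem (applied to $\sshf{Y}$ and $\sshf{Y}(\mathscr{D})$, not to extra twists) transfers this to the map of pushforward sheaves. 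The Euler sequence identifies the cokernel of $\sshf{F}\to H^0(L)\otimes\sshf{F}(1)$ with $\shf{T}_F$, so the normal bundle sequence of $F\subset\mathrm{Div}(X)$ peels off $\shf{T}_F$ and leaves $0\to\shf{N}\to H^1(\sshf{X})\otimes\sshf{F}\to H^1(L)\otimes\sshf{F}(1)\to 0$; dualizing this sequence of locally free sheaves yields (\ref{resolution of conormal sheaf}). So your overall strategy coincides with the paper's, but as written step (1) fails and the splitting you anticipate in steps (3)--(4) is neither true nor necessary.
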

\begin{proof}
Note $\sshf{Y}(\mathscr{D})\iso p^*L\otimes q^*\sshf{F}(1)$. Applying $q_*$ to (\ref{universal family}), we get the exact sequence
\begin{equation*}
    0\rightarrow \sshf{F}\rightarrow H^0(L)\otimes\sshf{F}(1)\rightarrow {\shf{T}_{Div(X)}\big|_{F}}\rightarrow H^1(\sshf{X})\otimes\sshf{F}\rightarrow H^1(L)\otimes\sshf{F}(1)\rightarrow 0,
\end{equation*}
where the third term comes from (\ref{pull back of normal sheaf}). The surjectivity of the last map is for the reason as follows.
For any point $[D]\in F$, we have the commutative diagram
$$\xymatrix{
R^1q_*\sshf{Y}\otimes\kappa([D])\ar[d] \ar[r]^{\iso} &H^1(\sshf{X})\ar[d]\\
R^1q_*\sshf{Y}(\mathscr{D})\otimes\kappa([D])\ar[r]^{\iso} &H^1(\sshf{X}(D))}$$

The two horizontal maps are isomorphisms because of Grauert's theorem. Since $\partial: H^1(\sshf{D}(D))\rightarrow H^2(\sshf{X})$ is injective by the semi-regularity assumption, the right vertical map is surjective, so is the left one.

Since the cokernel of $\sshf{F}\rightarrow H^0(L)\otimes\sshf{F}(1)$ is $\shf{T}_{F}$, we get the short sequence
\begin{equation*}
   \ses{\shf{N}}{H^1(\sshf{X})\otimes\sshf{F}}{H^1(L)\otimes\sshf{F}(1)}.
\end{equation*}
Dualizing it, we get
\begin{equation*}
    \ses{H^1(L)^*\otimes\sshf{F}(-1)}{H^1(\sshf{X})^*\otimes\sshf{F}}{\shf{N}^*}\qedhere
\end{equation*}
\end{proof}

\begin{corollary}\label{higher cohomology of conormal sheaf}
$\shf{N}^*$ has Castelnuovo-Mumford regularity 0.\qed
\end{corollary}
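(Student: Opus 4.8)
The plan is to deduce the statement directly from the short exact sequence (\ref{resolution of conormal sheaf}) of Theorem \ref{conormal sheaf}, using only the cohomology of line bundles on the projective space $F\iso|L|\iso\prj{r}$. Recall that on $\prj{r}$ one has $\mathrm{reg}\,\sshf{\prj{r}}(a)=-a$; in particular the middle term $H^1(\sshf{X})^*\otimes\sshf{F}$ of (\ref{resolution of conormal sheaf}) is $0$-regular and the sub-sheaf $H^1(L)^*\otimes\sshf{F}(-1)$ is $1$-regular. I will then invoke the elementary behaviour of Castelnuovo--Mumford regularity in a short exact sequence: for $\ses{\shf{A}}{\shf{B}}{\shf{C}}$ one has $\mathrm{reg}\,\shf{C}\le\max\{\mathrm{reg}\,\shf{B},\ \mathrm{reg}\,\shf{A}-1\}$ (cf. \cite{LA}, \S 1.8). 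Applied to (\ref{resolution of conormal sheaf}) this gives $\mathrm{reg}\,\shf{N}^*\le\max\{0,\,1-1\}=0$, which is the assertion.

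Equivalently — and this is the same computation made explicit — one twists (\ref{resolution of conormal sheaf}) by $\sshf{F}(-i)$ and reads off the long exact cohomology sequence
\[
\cdots\to H^1(\sshf{X})^*\otimes H^i\big(\sshf{F}(-i)\big)\to H^i\big(\shf{N}^*(-i)\big)\to H^1(L)^*\otimes H^{i+1}\big(\sshf{F}(-1-i)\big)\to\cdots .
\]
For $1\le i\le r-1$ both flanking terms vanish, since $H^i(\sshf{\prj{r}}(-i))=0$ for $0<i<r$ and, in the only boundary case $i=r-1$, $H^{r}(\sshf{\prj{r}}(-r))=0$; for $i=r$ one again has $H^{r}(\sshf{\prj{r}}(-r))=0$ while $H^{r+1}$ vanishes for dimension reasons; and $H^i(\shf{N}^*(-i))=0$ trivially for $i>r$. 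Hence $H^i(\shf{N}^*(-i))=0$ for all $i>0$, i.e. $\shf{N}^*$ is $0$-regular. (When $h^1(X,L)=0$ the sequence degenerates and $\shf{N}^*$ is a trivial bundle on $\prj{r}$, of regularity exactly $0$.)

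There is no genuine obstacle here: all the content sits in Theorem \ref{conormal sheaf}, and the corollary is a one-line cohomological consequence. The only point requiring a moment's attention is the boundary index $i=r-1$ (and $i=r$), where one must use the sharp vanishing $H^{r}(\sshf{\prj{r}}(a))=0\iff a\ge -r$ rather than only the interior vanishing $H^i(\sshf{\prj{r}})=0$ for $0<i<r$. This $0$-regularity is precisely the hypothesis on the conormal sheaf needed in Lemma \ref{proper line bundle} and Theorem \ref{rational}, applied with $Y=\textrm{Div}(X)$, the normal variety $\Omega$ in place of $X$, the point $[L]$, and the fibre $F\iso\prj{r}$ — for which $K_F+(\dim F-1)\sshf{F}(1)=\sshf{\prj{r}}(-2)$ is indeed noneffective — so the corollary feeds directly into the proof of Theorem \ref{main theorem}.
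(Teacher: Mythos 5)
Your argument is correct and is exactly the computation the paper has in mind when it marks the corollary with \qed as an immediate consequence of Theorem \ref{conormal sheaf}: twisting the sequence (\ref{resolution of conormal sheaf}) by $\sshf{F}(-i)$ and using $H^i(\prj{r},\sshf{\prj{r}}(-i))=0$ and $H^{i+1}(\prj{r},\sshf{\prj{r}}(-1-i))=0$ for $i>0$ (equivalently, the standard bound $\mathrm{reg}\,\shf{C}\le\max\{\mathrm{reg}\,\shf{B},\mathrm{reg}\,\shf{A}-1\}$ for a short exact sequence) gives $H^i(\shf{N}^*(-i))=0$ for all $i>0$. Your attention to the boundary indices $i=r-1,r$ is the right level of care, and the identification of where this $0$-regularity is used (Lemma \ref{proper line bundle} and Theorem \ref{rational}) matches the paper's use.
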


\subsection{Proof of theorem \ref{main theorem}}\mbox{}\\
From now on, let $q$ denote $h^1(X, \sshf{X})$, the irregularity of $X$.
\begin{lemma}\label{alpha map}
The natural map $\bigoplus_{n\ge 0}\bar{\frak{m}}^n/\bar{\frak{m}}^{n+1}\rightarrow\bigoplus_{n\ge 0}H^0(\is{}^n/\is{}^{n+1})$ is a surjective graded $k$-algebra morphism. Furthermore, if $R\le q$, then it is an isomorphism.
\end{lemma}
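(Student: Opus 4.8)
The map $\alpha$ is defined as follows. Since $F=\varphi^{-1}([L])$ scheme-theoretically, $\is{}=\bar{\frak m}\,\sshf{\textrm{Div}(X)}$, hence $\is{}^n=\bar{\frak m}^n\,\sshf{\textrm{Div}(X)}$; pulling back germs of functions along $\varphi$ and restricting to $F$ therefore carries $\bar{\frak m}^n$ into $H^0(F,\is{}^n/\is{}^{n+1})$ and $\bar{\frak m}^{n+1}$ to zero, and multiplicativity of $\varphi^{*}$ makes the induced graded map a homomorphism of graded $k$-algebras. (Equivalently, $\alpha$ is the associated graded, for the $\bar{\frak m}$-adic filtration on the left and the filtration $J_n=\ker\!\big(\,\cdot\to H^0(\sshf{nF})\big)$ on the right, of the natural ring map $\sshf{W^0_{\textrm{sr}},[L]}\to\varprojlim_n H^0(\sshf{nF})$ coming from the formal function theorem.)

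For surjectivity I would first observe that both graded rings are generated in degree $1$ over $k$: the source automatically, and the target because $\bigoplus\is{}^n/\is{}^{n+1}=\textrm{Sym}_{\sshf{F}}\shf{N}^*$ and $\textrm{Sym}^n H^0(F,\shf{N}^*)\to H^0(F,\textrm{Sym}^n\shf{N}^*)$ is onto for every $n$. The latter follows from (\ref{resolution of conormal sheaf}): it exhibits $\textrm{Sym}\,\shf{N}^*$ as the quotient of the polynomial $\sshf{F}$-algebra $\textrm{Sym}_{\sshf{F}}(H^1(\sshf{X})^{*}\otimes\sshf{F})$ by the ideal generated by the image of $H^1(L)^{*}\otimes\sshf{F}(-1)$, and since this image restricts to a regular sequence on every fibre $\kappa(x)[H^1(\sshf{X})^{*}]$ (the fibre map is injective, as $\shf{N}^*$ is locally free), each $\textrm{Sym}^n\shf{N}^*$ is resolved over $\textrm{Sym}_{\sshf{F}}(H^1(\sshf{X})^{*}\otimes\sshf{F})$ by the corresponding Koszul complex, whose $j$-th term in internal degree $n$ is a direct sum of copies of $\sshf{\prj{r}}(-j)$; chasing cohomology up this resolution shows that the cokernel of $\textrm{Sym}^n H^1(\sshf{X})^{*}\to H^0(F,\textrm{Sym}^n\shf{N}^*)$ is isomorphic to a vector space of the form $(\,\cdot\,)\otimes H^{h^1(L)}\!\big(\prj{r},\sshf{\prj{r}}(-h^1(L))\big)$, which vanishes for every value of $h^1(L)$. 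Next, applying $H^0$ to (\ref{resolution of conormal sheaf}) and using $H^0(\prj{r},\sshf{\prj{r}}(-1))=H^1(\prj{r},\sshf{\prj{r}}(-1))=0$ (when $r=0$, $\shf{N}^*$ is a skyscraper with $H^1=0$), one sees that the composite $\frak m/\frak m^2\twoheadrightarrow\bar{\frak m}/\bar{\frak m}^2\xrightarrow{\alpha_1}H^0(F,\shf{N}^*)$ is precisely $H^0$ of the sheaf surjection $H^1(\sshf{X})^{*}\otimes\sshf{F}\to\shf{N}^*$ of (\ref{resolution of conormal sheaf}) --- which is the conormal map $h\mapsto[\varphi^{*}h]$ of the Abel--Jacobi map, being dual to the inclusion $\shf{N}\hookrightarrow H^1(\sshf{X})\otimes\sshf{F}$ --- and this composite is surjective (indeed bijective when $r\ge1$). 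Hence $\alpha_1$ is onto, and as the source is generated in degree $1$, the image of $\alpha$ is the subalgebra generated by $\alpha_1(\bar{\frak m}/\bar{\frak m}^2)=H^0(F,\shf{N}^*)$, i.e. all of the target.

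For the isomorphism when $R\le q$, the Koszul computation above gives an explicit description of $B:=\bigoplus_n H^0(F,\textrm{Sym}^n\shf{N}^*)$: it is the ring of functions on the affinization of the total space of $\shf{N}$ over $\prj{r}$, hence a normal graded domain, and as a quotient of $\textrm{Sym}\,H^1(\sshf{X})^{*}$ it is a polynomial ring if $h^1(L)\neq h^0(L)$ and the quotient by a principal ideal of degree $h^0(L)$ if $h^1(L)=h^0(L)$; thus $\dim B\in\{q,q-1\}$ when $r\ge1$ and $\dim B=R$ when $r=0$. I would first deduce that $\dim\Omega=R$. Let $s\ge0$ be the dimension of the general fibre $|L'|$ of $\varphi|_{\Delta_0}\colon\Delta_0\to\Omega$, so $\dim\Omega=R-s$ and $s=h^0(L')-1\le h^0(L)-1$ by semicontinuity and $\ge 0$ since $\Omega\subset W^0(X)$. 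If $s\ge1$ then $r\ge1$ (otherwise $h^0(L)=1$ forces $s=0$), so $\dim B\le\dim\Omega=R-s\le R-1\le q-1$ by the hypothesis $R\le q$, whence $\dim B=q-1$, so $B$ is a hypersurface, so $h^1(L)=h^0(L)$ and $R=r+q-b=q-1$; but then $R-s\ge\dim B=q-1=R$, contradicting $s\ge1$. Hence $s=0$, $\varphi|_{\Delta_0}$ is birational onto $\Omega$, and $\dim\Omega=R$.

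Then I would conclude as follows. If $r=0$, then $\varphi$ is finite at the point $[D]=F$ with reduced fibre, so $\sshf{W^0_{\textrm{sr}},[L]}\to\sshf{\textrm{Div}(X),[D]}$ is a finite local homomorphism, surjective by Nakayama and injective since $\varphi$ is birational, hence an isomorphism; as $\sshf{\textrm{Div}(X),[D]}$ is regular, $\alpha$ is the associated graded of this isomorphism and so is an isomorphism. If $r\ge1$, then $\alpha_1$ is an isomorphism, so $\frak m/\frak m^2\iso\bar{\frak m}/\bar{\frak m}^2$ and $\Omega$ has embedding dimension $q=\dim\textrm{Pic}(X)$ at $[L]$; when $B$ is a polynomial ring, $q=\dim B\le\dim\Omega=R\le q$ forces $\Omega$ to be dense in a component of the smooth scheme $\textrm{Pic}(X)$, so $\sshf{W^0_{\textrm{sr}},[L]}=\sshf{\textrm{Pic}(X),[L]}$ is regular and the degree-$1$-bijective surjection $\alpha$ of polynomial rings is an isomorphism; when $h^1(L)=h^0(L)$, $\dim\Omega=R=q-1$, so $\Omega$ is a divisor in the locally factorial $\textrm{Pic}(X)$ and $\sshf{W^0_{\textrm{sr}},[L]}=\sshf{\textrm{Pic}(X),[L]}/(g)$ is a hypersurface singularity, whence $\textrm{gr}_{\bar{\frak m}}\sshf{W^0_{\textrm{sr}},[L]}=\textrm{Sym}\,H^1(\sshf{X})^{*}/(\textrm{in}\,g)$ and $\alpha$ becomes the quotient by an inclusion of principal ideals $(\textrm{in}\,g)\subseteq(f)$ with $\deg f=h^0(L)$, so that $\alpha$ is an isomorphism exactly when $\textrm{mult}_{[L]}\Omega=\deg\textrm{in}\,g$ equals $h^0(L)$. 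The bound $\textrm{mult}_{[L]}\Omega\ge h^0(L)$ is immediate from the surjectivity of $\alpha$ and the Hilbert series of $B$; the reverse inequality is the heart of the matter, and I expect to obtain it by estimating the multiplicity of $\Omega$ at $[L]$ through the birational model $\varphi\colon\Delta_0\to\Omega$ and the fibre $F=\prj{r}$ --- the analogue of Kempf's computation of the degree of the tangent cone of $W^0_d$ --- and this multiplicity bound is where I anticipate the principal difficulty.
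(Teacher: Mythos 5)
Your surjectivity argument is correct and is essentially the paper's own: degree one is handled by taking $H^0$ of the surjection in (\ref{resolution of conormal sheaf}) together with $\frak{m}/\frak{m}^2\iso H^1(\sshf{X})^*$, and the higher degrees by generation in degree one, which the paper phrases as $0$-regularity of $\shf{N}^*$ and of its symmetric powers (your Koszul chase of the resolution of $\textrm{Sym}^n\shf{N}^*$ is the same computation, and the needed vanishing is $H^j(\prj{r},\sshf{\prj{r}}(-j))=0$ for all $j\ge 1$, not only $j=h^1(L)$).

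The isomorphism half, however, has a genuine gap. First, your structural description of $B=\bigoplus_n H^0(F,\textrm{Sym}^n\shf{N}^*)$ is wrong: from the very Koszul resolution you use, the kernel of $\textrm{Sym}^n H^1(\sshf{X})^*\to H^0(\textrm{Sym}^n\shf{N}^*)$ is governed by $H^r(\prj{r},\sshf{\prj{r}}(-j))$ with $j\ge r+1$, so $B$ is the full polynomial ring precisely when $h^1(L)\le h^0(L)-1$ (i.e.\ $R\ge q$), a hypersurface of degree $h^0(L)$ precisely when $h^1(L)=h^0(L)$, and for $h^1(L)>h^0(L)$ it is neither --- its defining ideal is determinantal and $\dim B=R<q-1$. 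This last range is fully allowed by the hypothesis $R\le q$ and is in fact the typical Kempf/Brill--Noether situation (a curve with $d<g-1$), so both your deduction of $s=0$ (the step ``$\dim B\le q-1$, whence $\dim B=q-1$, so $B$ is a hypersurface'') and your final case analysis break down exactly there; the correct route to $\dim B=R$ and birationality when $b\ge r$ is the positivity of the top Segre class $(-1)^rs_r(\shf{N}^*)={b\choose r}$, not a polynomial/hypersurface dichotomy. Second, even in the hypersurface case you do treat, you explicitly leave the key inequality $\textrm{mult}_{[L]}\Omega\le h^0(L)$ unproven, and that bound (in all cases, including the determinantal one) is precisely the hard content of the statement. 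The paper does not reprove it either: it invokes Ein, Proposition 3.1(c) and Theorem 3.2, where the identification of $\textrm{gr}_{\bar{\frak{m}}}\sshf{W^0_{\textrm{sr}},[L]}$ with $B$ under $R\le q$ is carried out by a Kempf-type tangent-cone argument; your proposal as written does not supply a substitute for that step.
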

\begin{proof}
First consider the commutative diagram of $k$-vector spaces:
$$\xymatrix{
\frak{m}/{\frak{m}}^2 \ar@{->>}[r]\ar[d]^{\iso} & \bar{\frak{m}}/\bar{\frak{m}}^{2}\ar[d]\\
H^1(\sshf{X})^* \ar@{->>}[r] & H^0\paren{\shf{N}^*_{F/{Div(X)}}}}$$
Clearly the top horizontal map is surjective. The left vertical map is an isomorphism by \cite[Theorem 5.11]{KL}. The bottom horizontal one is surjective by (\ref{resolution of conormal sheaf}). Therefore
\begin{equation*}
    \bar{\frak{m}}/\bar{\frak{m}}^{2}\rightarrow  H^0\paren{\shf{N}^*_{F/{Div(X)}}}
\end{equation*}
is surjective.
For $n> 1$, consider the commutative diagram:
$$\xymatrix{
S^n(\bar{\frak{m}}/\bar{\frak{m}}^2) \ar@{->>}[r]\ar@{->>}[d] & \bar{\frak{m}}^n/\bar{\frak{m}}^{n+1}\ar[d]\\
S^nH^0(\is{}/\is{}^{2} )\ar@{->>}[r] & H^0(\is{}^n/\is{}^{n+1})}$$
The bottom horizontal map is surjective, because $\is{}/\is{}^{2}=\shf{N}^*_{F/{Div X}}$ is 0-regular. It follows that the right vertical map is surjective.

A proof for isomorphism when $R\le q$ can be found in \cite{Ein} Proposition 3.1 (c) and Theorem 3.2.
\end{proof}

\begin{proof}[Proof of Theorem 1.1]

By Theorem \ref{rational} and Corollary \ref{higher cohomology of conormal sheaf}, to finish the proof, it remains to show that any irreducible component $\Omega$ of $W^0_{\textrm{sr}}$ is normal. Let $Y=\varphi^{-1}(\Omega)\subset\textrm{Div}(X)$. Consider the commutative diagram with exact rows
$$\xymatrix{
0\ar[r] & \bar{\frak{m}}^n/ \bar{\frak{m}}^{n+1}\ar[d]^{\alpha_n}\ar[r] & {\sshf{\Omega, [L]}/{\bar{\frak{m}}}^{n+1}}\ar[d]^{\beta_{n+1}}\ar[r] &\sshf{\Omega, [L]}/\bar{\frak{m}}^n\ar[r]\ar[d]^{\beta_n} &0\\
0\ar[r] & H^0(\is{}^n/\is{}^{n+1})\ar[r] & H^0(\sshf{(n+1)F})\ar[r] & H^0(\sshf{nF})\ar[r]& 0}$$
By Lemma \ref{alpha map}, $\alpha_n$'s are surjective. By Snake lemma and induction on $n$, we get $\beta_n$ is surjective for all $n\ge 1$. It follows that $\sshf{\Omega, [L]}^{\wedge}=\varprojlim \sshf{\Omega, {[L]}}/\bar{\frak{m}}^n\twoheadrightarrow\varprojlim H^0(\sshf{nF})\iso(\varphi_*\sshf{Y})^{\wedge}_{[L]} $ by formal function theorem. Thus the canonical morphism $\sshf{\Omega, {[L]}}^{\wedge}\rightarrow (\varphi_*\sshf{Y})^{\wedge}_{[L]}$ is an isomorphism. Since the completion is a fully faithful functor, we get that $\varphi_*\sshf{Y}\iso\sshf{\Omega}$. Since $Y$ is smooth and all fibres are connected, $\Omega$ is normal.
\end{proof}

\subsection{Some consequences of Theorem \ref{conormal sheaf}}\mbox{}\\

The corollary below is a generalization of Clifford theorem to higher dimensional varieties. It would be interesting to study when the equalities can be achieved.
\begin{corollary}\label{Clifford inequality}
Assume $[L]\in W^0_{\textrm{sr}}$ and $h^1(L)>0$. Then\\
(i) $h^0(L)+h^1(L)\le q+1$.\\
(ii) If $X$ is a projective surface, then $h^0(L)\le \frac{\chi(L)+q+1}{2}$.
\end{corollary}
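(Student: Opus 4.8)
The plan is to read everything off the resolution (\ref{resolution of conormal sheaf}) supplied by Theorem \ref{conormal sheaf}, combined with the dimension count of Theorem \ref{Severi-Kodaira-Spencer}. Recall $F\iso|L|\iso\prj{r}$ with $r=h^0(L)-1$ and $b=h^1(L)$. Since $L$ is semi-regular, $\textrm{Div}(X)$ is smooth near $F$ of dimension $R=h^0(L)-h^1(L)-1+q=r-b+q$, so $\shf{N}^*$ is a genuine vector bundle on $\prj{r}$ of rank $R-\dim F=q-b$; in particular $q-b\ge 0$, which already disposes of the case $r=0$, where $h^0(L)+h^1(L)=1+b\le 1+q$.

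For part (i) assume from now on $r\ge 1$ and $b\ge 1$, and compare ranks with Chern classes in
\[
    \ses{H^1(L)^*\otimes\sshf{F}(-1)}{H^1(\sshf{X})^*\otimes\sshf{F}}{\shf{N}^*}.
\]
Writing $h=c_1(\sshf{F}(1))$, multiplicativity of the total Chern class gives $c(\shf{N}^*)=(1-h)^{-b}$ in $H^*(\prj{r})$, hence $c_k(\shf{N}^*)=\binom{b+k-1}{k}h^k$ for every $k$. On the other hand $\shf{N}^*$ has rank $q-b$, so $c_k(\shf{N}^*)=0$ whenever $k>q-b$. Applying this with $k=q-b+1$ (a legitimate degree, since $1\le q-b+1\le q$ because $1\le b\le q$): the coefficient $\binom{q}{q-b+1}$ is nonzero, so we are forced to have $h^{q-b+1}=0$ in $H^*(\prj{r})$, i.e. $q-b+1>r$. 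Therefore $r+b\le q$, and $h^0(L)+h^1(L)=(r+1)+b\le q+1$.

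For part (ii) the surface hypothesis is used only through the identity $\chi(L)=h^0(L)-h^1(L)+h^2(L)$. Adding it to the inequality of part (i) gives $2h^0(L)+h^2(L)\le\chi(L)+q+1$, and dropping the nonnegative term $h^2(L)$ yields $h^0(L)\le\tfrac{1}{2}(\chi(L)+q+1)$.

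The one delicate point — the step I would be most careful about — is the rank bookkeeping: one must be sure that $\shf{N}^*$ is honestly locally free of rank exactly $q-b$ (this is precisely where semi-regularity enters, via smoothness of $\textrm{Div}(X)$ near $F$ and the identification of Theorem \ref{conormal sheaf}), and that the forced vanishing ``$c_k=0$ for $k>\mathrm{rank}$'' genuinely produces a contradiction. It does so exactly because the hypothesis $b\ge 1$ keeps $\binom{q}{q-b+1}$ away from $0$, while $h^{q-b+1}\neq 0$ in $H^*(\prj{r})$ as soon as $r\ge q-b+1$; this is what forces $r+b\le q$. No input beyond Theorem \ref{conormal sheaf} is required.
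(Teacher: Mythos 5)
Your proof is correct and takes essentially the same route as the paper: both read off $\mathrm{rank}(\shf{N}^*)=R-r=q-b$ from the smoothness of $\textrm{Div}(X)$ near $F$ (Theorem \ref{Severi-Kodaira-Spencer}) together with the resolution (\ref{resolution of conormal sheaf}), force $q-b\ge r$ from the shape of that resolution, and deduce (ii) from (i) by discarding $h^2(L)\ge 0$. The only difference is that where the paper simply cites \cite[Proposition 2.5]{Ein} for the constraint $\mathrm{rank}(\shf{N}^*)\ge r$, you verify it directly via $c(\shf{N}^*)=(1-h)^{-b}$ on $\prj{r}$ and the vanishing of Chern classes above the rank (using $b\ge 1$ to keep the relevant binomial coefficient nonzero), which is a correct, self-contained substitute for the cited lemma.
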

\begin{proof}
By \cite[Proposition 2.5]{Ein}, the shape of the resolution (\ref{resolution of conormal sheaf}) of $\shf{N}^*$ forces $\textrm{rank}(\shf{N}^*)\ge r$.
Since $\dim{\textrm{Div(X)}}=R$, one has $\textrm{rank}(\shf{N}^*)=R-r$. Recall $R=h^0(L)-h^1(L)+q-1$ in (\ref{expected dimension}), we get $(i)$.
If $X$ is a surface, then $R\le \chi(L)+q-1$. So $h^0(L)=r+1\le \frac{R}{2}+1\le\frac{\chi(L)+q+1}{2}$.
\end{proof}

\begin{corollary}
Let $[L]\in W^0_{\textrm{sr}}$ and assume $R\le q$. Then up to a constant, the Hilbert-Samuel function $\psi$ for $\sshf{W^0, [L]}$ is
\begin{eqnarray*}
  \psi(p)   &=& \left\{\begin{array}{ll}
                   {p+q-1\choose q} & \textrm{if $b\le r$},  \\
                   {p+q-1\choose q}+\sum^b_{i=r+1}(-1)^{i+r}{p+q-i-1\choose q}{b\choose i}{i-1\choose r} & \textrm{if $b>r$}.
                       \end{array}\right.
\end{eqnarray*}
\end{corollary}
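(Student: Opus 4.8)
The plan is to combine Lemma~\ref{alpha map} with a Koszul-type computation of the cohomology of the symmetric powers of $\shf{N}^*$ on $F\iso\prj{r}$. Since $R\le q$, Lemma~\ref{alpha map} gives an isomorphism of graded $k$-algebras $\bigoplus_{n\ge 0}\bar{\frak{m}}^n/\bar{\frak{m}}^{n+1}\iso\bigoplus_{n\ge 0}H^0(\is{}^n/\is{}^{n+1})$. As $F$ is a smooth subvariety of the smooth variety $\textrm{Div}(X)$, hence regularly embedded, we have $\is{}^n/\is{}^{n+1}\iso S^n\shf{N}^*$; so, for the natural normalization $\psi(p)=\dim_k\paren{\sshf{W^0_{\text{sr}},[L]}/\bar{\frak{m}}^p}$,
\[
\psi(p)=\sum_{n=0}^{p-1}\dim_k\paren{\bar{\frak{m}}^n/\bar{\frak{m}}^{n+1}}=\sum_{n=0}^{p-1}h^0\paren{F,\,S^n\shf{N}^*},
\]
and it suffices to compute $h^0(F,S^n\shf{N}^*)$ for all $n\ge 0$.

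First I would note that for $r\ge 1$, $F\iso\prj{r}$ satisfies the hypotheses of Lemma~\ref{proper line bundle} with the polarization $\sshf{F}(1)$: indeed $H^1(\prj{r},\sshf{})=0$ and $K_{\prj{r}}+(r-1)\sshf{}(1)\iso\sshf{}(-2)$ is noneffective (the case $r=0$ being trivial, as there is no higher cohomology on a point). Since $\shf{N}^*$ is $0$-regular by Corollary~\ref{higher cohomology of conormal sheaf}, that lemma shows every $S^n\shf{N}^*$ is $0$-regular, so $H^i(S^n\shf{N}^*)=0$ for $i>0$ and therefore $h^0(S^n\shf{N}^*)=\chi(S^n\shf{N}^*)$.

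Next, write $b=h^1(L)$ and recall $q=h^1(\sshf{X})$. On $F\iso\prj{r}$ the exact sequence (\ref{resolution of conormal sheaf}) reads $0\rightarrow\sshf{F}(-1)^{\oplus b}\rightarrow\sshf{F}^{\oplus q}\rightarrow\shf{N}^*\rightarrow 0$. Since $\sshf{F}(-1)^{\oplus b}$ is a subbundle, locally a direct summand, the bundle $S^n\shf{N}^*$ is resolved by the exact complex of sheaves on $\prj{r}$ with terms $\bigwedge^{j}\bigl(\sshf{F}(-1)^{\oplus b}\bigr)\otimes S^{n-j}\bigl(\sshf{F}^{\oplus q}\bigr)$ for $0\le j\le\min(b,n)$ (the degree-$n$ strand of the Koszul complex attached to the surjection $\textrm{Sym}(\sshf{F}^{\oplus q})\twoheadrightarrow\textrm{Sym}(\shf{N}^*)$); explicitly the $j$-th term is $\sshf{F}(-j)^{\oplus\binom{b}{j}\binom{n-j+q-1}{q-1}}$. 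Taking Euler characteristics and substituting $\chi(\sshf{\prj{r}}(-j))=\binom{r-j}{r}$ --- which equals $1$ when $j=0$, vanishes for $1\le j\le r$, and equals $(-1)^r\binom{j-1}{r}$ for $j\ge r+1$ --- gives
\[
h^0(S^n\shf{N}^*)=\binom{n+q-1}{q-1}+\sum_{i=r+1}^{\min(b,n)}(-1)^{i+r}\binom{b}{i}\binom{i-1}{r}\binom{n-i+q-1}{q-1},
\]
the sum being empty when $b\le r$.

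Finally I would sum this over $n=0,\dots,p-1$, interchange the order of summation in the double sum, and apply the hockey-stick identity $\sum_{n=0}^{N}\binom{n+q-1}{q-1}=\binom{N+q}{q}$ (once with $N=p-1$, once with $N=p-1-i$) to obtain the asserted formula; the dichotomy $b\le r$ versus $b>r$ comes directly from the three ranges of $j$ above. The resulting identity holds for $p\gg 0$, equivalently as an identity of polynomials in $p$, which is the sense of ``up to a constant''. The argument is essentially routine once Lemmas~\ref{proper line bundle} and~\ref{alpha map} and Corollary~\ref{higher cohomology of conormal sheaf} are in hand; the one step that needs genuine care is the exactness of the Koszul resolution of $S^n\shf{N}^*$ together with its truncation index $\min(b,n)$, and the bookkeeping of the three ranges of $j$ in $\chi(\sshf{\prj{r}}(-j))$.
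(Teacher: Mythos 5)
Your proposal is correct and follows essentially the same route as the paper: the isomorphism $\dim(\bar{\frak{m}}^n/\bar{\frak{m}}^{n+1})=h^0(S^n\shf{N}^*)$ from Lemma~\ref{alpha map}, the $0$-regularity of $S^n\shf{N}^*$ giving $h^0=\chi$, and the Euler-characteristic computation via the Koszul strand attached to (\ref{resolution of conormal sheaf}), which is exactly the paper's Eagon--Northcott complex. The only (cosmetic) difference is at the end, where you sum $h^0(S^n\shf{N}^*)$ directly using the hockey-stick identity, while the paper recovers $\psi$ from its first difference $\Delta\psi(p)=\chi(S^p\shf{N}^*)$ by citing the argument of \cite[I, 7.3 (b)]{HA}.
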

\begin{proof}
The exact Eagon-Northcott complex associated to (\ref{resolution of conormal sheaf}) is
\begin{eqnarray*}
    0&\rightarrow & S^{p-b}H^1(\sshf{X})^*\otimes\bigwedge^bH^1(L)^*\otimes\sshf{\prj{r}}(-b)\rightarrow\cdots\rightarrow S^{p-1}H^1(\sshf{X})^*\otimes H^1( L)^*\otimes\sshf{\prj{r}}(-1)\\
    & \rightarrow & S^pH^1(\sshf{X})^*\otimes \sshf{\prj{r}}\rightarrow S^p\shf{N}^*\rightarrow 0.
\end{eqnarray*}
So for $p\gg 0$,
\begin{eqnarray*}
  \chi(S^p\shf{N}^*) &=& \sum^b_{i=0}(-1)^i\chi\paren{S^{p-i}H^1(\sshf{X})^*\otimes\bigwedge^iH^1(L)^*\otimes\sshf{\prj{r}}(-i)} \\
        &=& \left\{\begin{array}{ll}
                   {p+q-1\choose q-1} & \textrm{if $b\le r$},  \\
                   {p+q-1\choose q-1} +\sum^b_{i=r+1}(-1)^{i+r}{p+q-i-1\choose q-1}{b\choose i}{i-1\choose r} & \textrm{if $b>r$}.
                       \end{array}\right.
\end{eqnarray*}
Since
\begin{eqnarray*}
  \Delta\psi(p) &=&  \psi(p+1)-\psi(p)\\
   &=& \dim{(\bar{\frak{m}}^p/{\bar{\frak{m}}^{p+1}})} \\
   &=& h^0(S^p\shf{N}^*) \hspace{3cm}\textrm{by (\ref{alpha map})}\\
   &=& \chi(S^p\shf{N}^*) \hspace{3.1cm}\textrm{$S^p\shf{N}^*$ is 0-regular}
\end{eqnarray*}
we get the conclusion by the proof of \cite[I, 7.3 (b)]{HA} .
\end{proof}
The multiplicity $\mu(\sshf{W, [L]})$ is defined as $(\text{leading coefficient of } \psi)\cdot(\deg{\psi})!$. To avoid combinatorial relations for calculating $\mu$, we resort to intersection theory.

\begin{corollary}\label{Riemann-Kempf}
Let $[L]\in W^0_{\textrm{sr}}$ and assume that $\varphi$ is birational. Then $\mu={b\choose r}$.
\end{corollary}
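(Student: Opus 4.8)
The plan is to compute $\mu$ as a top Segre number of the normal bundle $\shf{N}=\shf{N}_{F/\textrm{Div}(X)}$ on $F\iso\prj{r}$, and then to evaluate that number from the resolution (\ref{resolution of conormal sheaf}); this is the ``intersection theory'' replacing the binomial identities one would otherwise need to extract $\mu$ from the preceding corollary. The first observation is that birationality of $\varphi$ forces $R\le q$: since $L$ is semi-regular, Theorem \ref{Severi-Kodaira-Spencer} makes $\textrm{Div}(X)$ smooth of dimension $R$ along $F$, so $\dim\Omega=\dim\textrm{Div}(X)=R$, while $\Omega$ lies in a component of $\textrm{Pic}(X)$ of dimension $q=h^1(\sshf{X})$. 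Hence the hypothesis $R\le q$ of the preceding corollary holds, and its proof gives, for $p\gg 0$,
\begin{equation*}
 \Delta\psi(p)=\dim_k\paren{\bar{\frak m}^p/\bar{\frak m}^{p+1}}=h^0\paren{S^p\shf{N}^*}=\chi\paren{S^p\shf{N}^*},
\end{equation*}
the last equality because $S^p\shf{N}^*$ is $0$-regular (Lemma \ref{proper line bundle} and Corollary \ref{higher cohomology of conormal sheaf}).

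Next I would recognize $p\mapsto\chi(S^p\shf{N}^*)$ as the Hilbert polynomial, with respect to the tautological $\sshf{P}(1)$, of the projective bundle $P:=\textrm{Proj}_F(\textrm{Sym}\,\shf{N}^*)$ — which is nothing but the exceptional divisor of the blow-up of $\textrm{Div}(X)$ along $F$, so that $\sshf{P}(1)=\sshf{}(-P)|_P$ and $\dim P=R-1$. Thus $\Delta\psi(p)=\chi(P,\sshf{P}(p))$, whence the leading coefficient of $\Delta\psi$ is $\tfrac{1}{(R-1)!}\int_P c_1(\sshf{P}(1))^{R-1}$. Since $\sshf{W^0_{\textrm{sr}},[L]}$ has dimension $R$, the Hilbert--Samuel function $\psi$ has degree $R$, so
\begin{equation*}
 \mu=R!\cdot(\textrm{leading coeff of }\psi)=(R-1)!\cdot(\textrm{leading coeff of }\Delta\psi)=\int_P c_1(\sshf{P}(1))^{R-1}=\int_F s_r(\shf{N}),
\end{equation*}
the last equality being the definition of the Segre classes $s_\bullet(\shf{N})$ normalized by $s(\shf{N})c(\shf{N})=1$.

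It then remains to evaluate $\int_F s_r(\shf{N})$. Dualizing (\ref{resolution of conormal sheaf}) — equivalently, invoking the short exact sequence $\ses{\shf{N}}{H^1(\sshf{X})\otimes\sshf{F}}{H^1(L)\otimes\sshf{F}(1)}$ already produced inside the proof of Theorem \ref{conormal sheaf} — and writing $h$ for the hyperplane class of $F\iso\prj{r}$, one gets $c(\shf{N})\cdot(1+h)^b=1$, hence $c(\shf{N})=(1+h)^{-b}$ and $s(\shf{N})=(1+h)^b$; so $\int_F s_r(\shf{N})=\binom{b}{r}$, which is the asserted value of $\mu$. The one genuinely delicate point is the bookkeeping of the $\mathbb{P}(-)$- and Segre-class conventions in the middle paragraph: one must decide whether the integral is a Segre number of $\shf{N}$ or of $\shf{N}^*$ and fix the sign accordingly. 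I would pin this down by working in the convention in which $\mathbb{P}(\shf{E})=\textrm{Proj}(\textrm{Sym}\,\shf{E}^*)$ with tautological quotient $\sshf{}(1)$ and $s_i(\shf{E})=\pi_*(c_1(\sshf{}(1))^{\textrm{rank}\,\shf{E}-1+i})$, $s(\shf{E})c(\shf{E})=1$, so that $P=\mathbb{P}(\shf{N})$ and the relevant integral is indeed $\int_F s_r(\shf{N})$; the fact that $\mu$ must come out a positive integer is a reassuring check on the sign. With that settled, everything else is routine Riemann--Roch.
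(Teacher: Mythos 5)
Your proof is correct, but it reaches the crucial identity $\mu=\deg s_r(\shf{N})$ by a genuinely different route than the paper. The paper quotes two general facts from Fulton: the multiplicity of $W^0_{\textrm{sr}}$ at $[L]$ equals the degree of the zero-dimensional Segre class $s_0([L],W^0_{\textrm{sr}})$, and Segre classes push forward under the proper birational morphism $\varphi$, so $\mu=\deg s_r(\prj{r},\textrm{Div}(X))=\deg s_r(\shf{N})$; in particular it never invokes Lemma \ref{alpha map}. You instead re-derive this instance of ``multiplicity equals the Segre number of the normal cone'' by hand: birationality gives $\dim\Omega=R\le q$ (equivalently $b\ge r$), so Lemma \ref{alpha map} identifies the associated graded ring with $\bigoplus_p H^0(S^p\shf{N}^*)$, $0$-regularity of $S^p\shf{N}^*$ converts $h^0$ into $\chi$, and asymptotic Riemann--Roch on $P=\mathbb{P}(\shf{N})=\mathrm{Proj}(\mathrm{Sym}\,\shf{N}^*)$ (the exceptional divisor of the blow-up along $F$) turns the leading coefficient of the Hilbert--Samuel function into $\int_F s_r(\shf{N})$; your convention bookkeeping ($\pi_*\sshf{P}(p)=S^p\shf{N}^*$, $s_i(\shf{E})=\pi_*c_1(\sshf{}(1))^{\mathrm{rank}(\shf{E})-1+i}$, $s\cdot c=1$) is consistent and the sign comes out right. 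Both arguments finish identically, reading $s(\shf{N})=(1+h)^b$ off the resolution (\ref{resolution of conormal sheaf}). Your route is closer in spirit to Kempf's original computation and to the Hilbert--Samuel corollary preceding this one, and it makes transparent exactly where birationality enters (only through $\dim\Omega=R$ and $R\le q$); the paper's route is shorter because Fulton's birational invariance of Segre classes replaces Lemma \ref{alpha map} and the Riemann--Roch asymptotics, and it avoids any reliance on the $R\le q$ isomorphism.
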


\begin{proof}
Since $\mu$ coincides with top Segre class of $([L], W_{sr}^0)$, which is invariant under a birational proper morphism (cf. \cite[Chap. 4]{FUL}),
\begin{eqnarray*}
   \mu &=& s_0([L], W_{sr}^0) \\
       &=& s_{r}(\prj{r}, \textrm{Div}X)\\
       &=& (-1)^rs_r(\shf{N}^*).
\end{eqnarray*}
Let $H$ be the class of a hyperplane section in $\prj{r}$, again by (\ref{resolution of conormal sheaf}),
\begin{equation*}
    s_t(\shf{N}^*)=(1-Ht)^b,
\end{equation*}
which concludes the proof.
\end{proof}

\section{Examples: Ruled Surface}
Though the condition for semi-regular line bundles (Definition \ref{semi-regular line bundle}) looks quite strong, we shall show that it does not automatically imply that $W^0_{sr}(X)$ is smooth, which is clear in the case that $X$ is a curve. We shall construct an example of dimension 2 (for simplicity) following such rules: (i) $W^0_{sr}(X)$ is singular; (ii) $W^0_{sr}(X)$ is nontrivial, i.e.~$W^0_{sr}(X)$ is not isomorphic to $W^0(C)$ for some curve $C$ and $\dim{W^0_{sr}(X)}\ge 2$, in particular $q=\dim{\text{Pic}(X)}\ge 2$; (iii) $W^0_{sr}(X)$ can be explicitly computed, at least for one component.  This motivates the work in this section.
\vspace{0.1cm}

We start by fixing some notations.
\begin{itemize}
  \item $C$: a smooth projective curve of genus $g\ge 2$.
  \item $B$: a line bundle on $C$ of degree $d_0\ge 3$.
  \item $E$: a rank two vector bundle on $C$, fitting into the exact sequence $\ses{\sshf{C}}{E}{B}$.
  \item $X$: $=\mathbb{P}(E)$ and $\pi: X\rightarrow C$ the canonical projection.
  \item $\textrm{Pic}^{(i, j)}(X)$: the connected component of $\textrm{Pic}(X)$ consisting of line bundles of the form $\sshf{X}(i)\otimes\pi^* M$, where $\deg{M}=j$.
  \item $W^0_{i, j}(X):= W^0_{\textrm{sr}}(X)\cap \textrm{Pic}^{(i, j)}(X)$, where ``sr'' is omitted for simplicity.
  \item Linear span: suppose $C$ is embedded into a projective space by a very ample line bundle $A$. Let $D$ be an effective divisor on $C$. One has the exact sequence $\ses{H^0(C, A(-D))}{H^0(C, A)}{Q}$. The \textit{linear span} $\lsp{D}:=\mathbb{P}(Q)\subseteq \mathbb{P}(H^0(C, A))$. Note $H^0(C, A(-D))$ is the space of linear defining equations of $\lsp{D}$.
  \item $X_{|L|}$: the variety swept out by linear spans of divisors in $|L|$, where $L$ is a line bundle on $C$. See appendix for its basic properties. The notion $X_{|L|}$ depends on the choice of embedding of $C$.
\end{itemize}

\subsection{Geometric interpretation of the extension $\ses{\sshf{C}}{E}{B}$}\mbox{}\\

Since $\deg(K_C\otimes B)=2g-2+d_0\ge 2g+1$, the complete linear system $|K_C\otimes B|$ induces an embedding
\begin{equation*}
    \varphi: C\hookrightarrow\prj{N}=\mathbb{P}\paren{H^0\paren{K_C\otimes B}},
\end{equation*}
where $N=g+d_0-2$.
By Serre duality,
\begin{equation*}
    H^0(C, K_C\otimes B)^*\iso H^1\paren{C, B^{-1}}\iso \textrm{Ext}^1\paren{B, \sshf{C}}.
\end{equation*}
So a point $\eta\in \prj{N}$ determines an extension of $B$ by $\sshf{C}$
\begin{equation}\label{extension}
    \ses{\sshf{C}}{E}{B},
\end{equation}
uniquely up to isomorphism.

\begin{remark}
The idea of realizing an extension class of $B$ by $\sshf{C}$ as a point in $\mathbb{P}(H^0(K_C\otimes B))$ is borrowed from Bertram \cite{BE}.
\end{remark}

\subsection{Characterization of $W^0_{1, \star}(X)$}
\begin{proposition}[Effectiveness Criterion]\label{characterization}
Let $M$ be a line bundle on $C$. $H^0(X, \sshf{X}(1)\otimes\pi^* M)=H^0(C, E\otimes M)\neq 0$ if and only if
\begin{enumerate}
  \item either $H^0(M)\neq 0$,
  \item or $M\iso B^{-1}\otimes L$ for some effective line bundle $L$, such that $\eta\in X_{|L|}$.
\end{enumerate}
\end{proposition}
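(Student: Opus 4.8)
The plan is to push the question down to cohomology on $C$ via $\pi$, and then recognise the two right-hand alternatives as the non-vanishing of $H^0$ of two pieces of a single long exact sequence. First I would record, by the projection formula together with $\pi_*\sshf{X}(1)\iso E$, the identification $H^0(X,\sshf{X}(1)\otimes\pi^*M)\iso H^0(C,E\otimes M)$ already built into the statement, so that it suffices to decide when $H^0(C,E\otimes M)\neq 0$. Since $E$ is locally free, tensoring $\ses{\sshf{C}}{E}{B}$ by $M$ stays exact, and I would take cohomology to get
\begin{equation*}
 0\to H^0(M)\to H^0(E\otimes M)\to H^0(B\otimes M)\xrightarrow{\ \delta\ } H^1(M),
\end{equation*}
identifying $\delta$ with cup product by the extension class: under $\mathrm{Ext}^1(B,\sshf{C})\iso H^1(B^{-1})$ (valid since $B$ is a line bundle) this class is the point $\eta$ used to build $E$, and $\delta(s)=\eta\cup s$ via the pairing $H^1(B^{-1})\otimes H^0(B\otimes M)\to H^1(M)$. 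Then $H^0(E\otimes M)\neq 0$ exactly when either $H^0(M)\neq 0$ — alternative (1), which also disposes of the ``if'' direction in case (1) — or $H^0(M)=0$ and $\ker\delta\neq 0$.

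Assuming $H^0(M)=0$, I would note next that $\ker\delta\neq 0$ forces $H^0(B\otimes M)\neq 0$, so $L:=B\otimes M$ is an effective line bundle with $M\iso B^{-1}\otimes L$; conversely this $L$ is the only candidate with $M\iso B^{-1}\otimes L$, and $\eta\in X_{|L|}$ already entails $L$ effective. So both implications of the proposition reduce to one equivalence: with $L=B\otimes M$, so that $\delta$ is a map $H^0(L)\to H^1(B^{-1}\otimes L)$, $s\mapsto\eta\cup s$, one has $\ker\delta\neq 0$ if and only if $\eta\in X_{|L|}$.

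To establish this I would fix $0\neq s\in H^0(L)$, set $D=\mathrm{div}(s)\in|L|$ (so $L\iso\sshf{C}(D)$), and observe that $\delta(s)=\eta\cup s$ is the image of $\eta$ under the map $\rho_D\colon H^1(B^{-1})\to H^1(B^{-1}(D))$ induced by the inclusion $B^{-1}\hookrightarrow B^{-1}(D)$. From $\ses{B^{-1}}{B^{-1}(D)}{(B^{-1}(D))|_D}$ this $\rho_D$ is surjective, and Serre duality identifies $\ker\rho_D$ with the annihilator of $H^0(K_C\otimes B(-D))\subseteq H^0(K_C\otimes B)$ inside $H^0(K_C\otimes B)^*\iso H^1(B^{-1})$. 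Thus $\delta(s)=0$ iff $\eta$, regarded in $H^0(K_C\otimes B)^*$, kills $H^0(K_C\otimes B(-D))$, and since $H^0(K_C\otimes B(-D))$ is precisely the space of linear forms vanishing on $\lsp{D}\subseteq\prj{N}=\mathbb{P}(H^0(K_C\otimes B))$, this says exactly $\eta\in\lsp{D}$. Letting $D$ run over $|L|$ yields $\ker\delta\neq 0\iff\eta\in\bigcup_{D\in|L|}\lsp{D}=X_{|L|}$, the last equality being the description of $X_{|L|}$ from the appendix; combined with the previous two steps this proves both implications.

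The step I expect to be the main obstacle is the last one — making the Serre duality precise enough that ``$\delta(s)=0$'' becomes literally ``$\eta\in\lsp{D}$''. This requires fixing the $\mathbb{P}(-)$ convention consistently for $|K_C\otimes B|$ and for linear spans, checking that the connecting map genuinely is cup product with the chosen representative of $\eta$ (so that no scalar or sign corrupts the vanishing statement), and appealing to the appendix for the properties of $X_{|L|}$; once these compatibilities are settled the argument is formal. The other ingredients — the projection formula, the long exact sequence, the forced identity $L=B\otimes M$ — are routine, and the hypotheses $g\ge 2$, $d_0\ge 3$ are used only to make $K_C\otimes B$ very ample so that the linear spans live inside an honest embedding $C\hookrightarrow\prj{N}$.
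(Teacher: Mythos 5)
Your argument is correct and follows essentially the same route as the paper: twist the extension $\ses{\sshf{C}}{E}{B}$ by $M=B^{-1}\otimes L$, read off $h^0(E\otimes M)=h^0(M)+\dim\ker\delta$ from the long exact sequence, and identify $\ker\delta$ with the sections $s$ whose divisor $D=(s)_0$ satisfies $\eta\in\lsp{D}$, so that $\ker\delta\neq 0$ iff $\eta\in X_{|L|}$. The only (cosmetic) difference is in the kernel identification: the paper forms the pulled-back subextension $\ses{\sshf{C}}{F}{B(-D)}$ via the snake lemma and tests its splitting, whereas you use cup-product functoriality of $\delta$ and Serre duality to see that $\delta(s)$ is the image of $\eta$ in $H^1(B^{-1}(D))$, which vanishes exactly when $\eta$ annihilates $H^0(K_C\otimes B(-D))$ --- the same computation in different packaging.
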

\begin{proof}
Write $M$ as $B^{-1}\otimes L$ for some line bundle $L$. Twisting (\ref{extension}) by $B^{-1}\otimes L$ yields the exact sequence
\begin{equation*}
    0\rightarrow{H^0\paren{B^{-1}\otimes L}}\rightarrow{H^0\paren{E\otimes B^{-1}\otimes L}}\rightarrow{H^0(L)}\xrightarrow{\delta} H^1\paren{B^{-1}\otimes L}\rightarrow\cdots,
\end{equation*}
which implies
\begin{equation}\label{number of sections}
  h^0\paren{E\otimes M}=h^0(M)+\dim(\ker{\delta}).
\end{equation}
Then the proposition follows from the lemma below.
\end{proof}

\begin{lemma}\label{Kernel}
\begin{equation*}
    \ker \delta=\{s\in H^0(L) |\; D=(s)_0, \eta\in \lsp{D}\}.
\end{equation*}
\end{lemma}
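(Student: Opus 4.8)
The plan is to unwind the definition of the connecting map $\delta$ in terms of the extension class $\eta$ and Serre duality. Recall that $\delta\colon H^0(C,L)\to H^1(C,B^{-1}\otimes L)$ is cup product with the extension class $e\in \mathrm{Ext}^1(B,\sshf{C})\iso H^1(C,B^{-1})$ of the sequence $\ses{\sshf{C}}{E}{B}$; that is, $\delta(s)=s\cdot e$ under the natural pairing $H^0(C,L)\otimes H^1(C,B^{-1})\to H^1(C,B^{-1}\otimes L)$. By Serre duality $H^1(C,B^{-1}\otimes L)^*\iso H^0(C,K_C\otimes B\otimes L^{-1})$, so $s\in\ker\delta$ if and only if $\delta(s)$ kills every $t\in H^0(C,K_C\otimes B\otimes L^{-1})$. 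Using the compatibility of cup product with Serre duality, $\langle \delta(s),t\rangle$ equals the image of $\eta\in H^0(C,K_C\otimes B)^*$ evaluated on the section $s\cdot t\in H^0(C,K_C\otimes B)$ (the multiplication $H^0(C,L)\otimes H^0(C,K_C\otimes B\otimes L^{-1})\to H^0(C,K_C\otimes B)$). So $s\in\ker\delta$ iff $\eta$ annihilates $s\cdot H^0(C,K_C\otimes B\otimes L^{-1})$ inside $H^0(C,K_C\otimes B)^*$.

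Next I would identify this condition geometrically. Write $A=K_C\otimes B$, the very ample bundle giving the embedding $\varphi\colon C\hookrightarrow\prj{N}=\mathbb{P}(H^0(C,A))$. For $s\in H^0(C,L)$ with divisor $D=(s)_0$, multiplication by $s$ gives an injection $H^0(C,A(-D))\hookrightarrow H^0(C,A)$, and its image is exactly $s\cdot H^0(C,A\otimes L^{-1})$ provided $D$ is effective of the expected degree — here one should note $A(-D)=K_C\otimes B\otimes L^{-1}(D-D)$, wait more precisely $A(-D)\iso K_C\otimes B\otimes \sshf{C}(-D)$ and since $\sshf{C}(D)\iso L$ we have $A(-D)\iso K_C\otimes B\otimes L^{-1}$, so multiplication by $s$ identifies $H^0(C,A\otimes L^{-1})$ with the subspace $H^0(C,A(-D))\subset H^0(C,A)$ of linear forms vanishing on $\lsp{D}$. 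Therefore $\eta$ annihilates $s\cdot H^0(C,A\otimes L^{-1})$ exactly when $\eta$, viewed as a point of $\prj{N}$, lies in the zero locus of all the linear defining equations of $\lsp{D}$, i.e. $\eta\in\lsp{D}$. Combining the two steps gives $\ker\delta=\{s\in H^0(C,L)\mid \eta\in\lsp{(s)_0}\}$, which is the claim.

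The main obstacle I anticipate is making the identification of $\delta$ with a multiplication pairing fully rigorous — that is, checking that cup product with the extension class, when dualized via Serre duality, really becomes the bilinear multiplication map $H^0(C,L)\otimes H^0(C,A\otimes L^{-1})\to H^0(C,A)$ followed by evaluation at $\eta$. This is a standard but slightly delicate diagram chase: one writes the extension sequence, tensors with $L$, takes the long exact sequence to get $\delta$, and then uses the functoriality of Serre duality (compatibility of cup product $H^1(C,B^{-1})\otimes H^0(C,L)\to H^1(C,B^{-1}\otimes L)$ with the trace pairing). The cleanest route is probably to express everything through the description of $\eta$ already set up in the paper: $\eta\in\prj{N}=\mathbb{P}(H^0(K_C\otimes B))$ corresponds to a hyperplane $\eta^\perp\subset H^0(K_C\otimes B)^*$... actually $\eta$ is a line in $H^0(K_C\otimes B)^*$, and the extension class $e\in H^1(C,B^{-1})\iso H^0(K_C\otimes B)^*$ is a generator of that line, so $\langle\delta(s),t\rangle = e(s\cdot t)$ up to scalar; then $s\in\ker\delta$ iff $e$ kills $s\cdot H^0(A\otimes L^{-1}) = H^0(A(-D))$, iff $e\in H^0(A(-D))^\perp$, iff the point $\eta\in\mathbb{P}(H^0(A)^*)$ lies in $\mathbb{P}((H^0(A)/H^0(A(-D)))^*)=\lsp{D}$. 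I would present the argument in this order: (1) $\delta$ is cup product with $e$; (2) Serre-dualize to get $\langle\delta(s),t\rangle=e(st)$; (3) translate $e\perp s\cdot H^0(A\otimes L^{-1})$ into $\eta\in\lsp{D}$ via the definition of linear span in the paper.
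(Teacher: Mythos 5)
Your argument is correct, and it takes a genuinely different (though closely parallel) route from the paper's. The paper never dualizes $\delta$ directly: it pulls the extension $\ses{\sshf{C}}{E}{B}$ back along $B(-D)\hookrightarrow B$ via a snake-lemma diagram, observes that the class of the resulting extension $\ses{\sshf{C}}{F}{B(-D)}$ in $\textrm{Ext}^1(B(-D),\sshf{C})\iso H^1(B^{-1}\otimes L)$ is exactly $\delta(s)$, and then translates $\eta\in\lsp{D}$ into the vanishing of the composition $H^0(K_C\otimes B(-D))\to H^0(K_C\otimes B)\xrightarrow{\eta} H^1(K_C)$, which is precisely the splitting criterion for $F$. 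Your proof dispenses with the auxiliary bundle $F$ and instead computes: $\delta$ is cup product with the class $e$, Serre duality plus associativity of cup product gives $\langle\delta(s),t\rangle=e(st)$, and multiplication by $s$ identifies $H^0(A\otimes L^{-1})$ with $H^0(A(-D))\subset H^0(A)$, so $\ker\delta$ is detected by $e$ annihilating the linear equations of $\lsp{D}$. The step you flag as delicate (compatibility of the connecting map and cup product with the trace pairing) is the exact counterpart of what the paper uses implicitly when it asserts that $F$ splits if and only if the displayed composition vanishes — in both cases this is functoriality of Serre duality, so the two proofs are dual presentations of the same computation; the paper's is more geometric and extension-theoretic, yours is shorter and purely linear-algebraic once the standard cup-product facts are granted. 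One small point of care: keep the projectivization convention consistent — in the paper $\eta\in\mathbb{P}\paren{H^0(K_C\otimes B)}$ is a line in $H^0(K_C\otimes B)^*$ (equivalently a one-dimensional quotient of $H^0(K_C\otimes B)$), so your passage between $\mathbb{P}(H^0(A)^*)$ and $\lsp{D}=\mathbb{P}\paren{H^0(A)/H^0(A(-D))}$ should be read with that convention, as you in fact do when you take $e$ to be a generator of the line corresponding to $\eta$.
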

\begin{proof}
Given $s\in H^0(L)$,let $D=(s)_0$. There exists an associated sequence $\ses{B(-D)}{B}{B\otimes\sshf{D}}$, which can be completed by the snake lemma as follows,

\begin{equation*}
    \xymatrix{
            &           & 0\ar[d]                  & 0\ar[d]  \\
   0 \ar[r] & \sshf{C}\ar[r]\ar@{=}[d] & F\ar[r]\ar[d] & B(-D)\ar[r]\ar[d]^{\sigma_D} & 0\\
   0 \ar[r] & \sshf{C}\ar[r] &  E\ar[r]\ar[d] &  B \ar[r]\ar[d] & 0 \\
            &           & B\otimes\sshf{D}\ar[d] \ar@{=}[r]                    &   B\otimes\sshf{D}\ar[d]\\
            &           & 0                         &    0
}
\end{equation*}
Noting that the extension class of the first row is $\delta(s)\in H^1(B^{-1}\otimes L)\iso \textrm{Ext}^1(B\otimes L^{-1}, \sshf{C})$, we have
\begin{eqnarray*}
    && \eta\in\lsp{D} \\
   &\iff & \text{the composition} \quad H^0\paren{K_C\otimes B(-D)}\rightarrow H^0(K_C\otimes B)\xrightarrow{\eta} H^1(K_C)\quad\text{is zero} \\
   &\iff& \ses{\sshf{C}}{F}{B(-D)} \quad\text{splits}  \\
   &\iff & \delta(s)=0.
\end{eqnarray*}
\end{proof}

Recall that for a ruled surface $X$, $H^2(X, \sshf{X})=0$, so a divisor $\Sigma\subset X$ is semi-regular if and only if $H^1\paren{\sshf{\Sigma}(\Sigma)}=0$. As in \cite{HA}, we denote a closed fibre of $\pi: X\rightarrow C$ by $f$. For a divisor $\alpha$ on $C$, we write $\alpha f$ for $\pi^*\alpha$ by abuse of notation.

\begin{proposition}[Semi-regularity Criterion]\label{obstruction group}
Let $\Sigma=\Gamma+\alpha f\in |\sshf{X}(1)\otimes \pi^*(B^{-1}\otimes L)|$, where $\Gamma$ is the image of a section $\sigma: C\rightarrow X$. Then $\sigma$ corresponds to
\begin{equation*}
    \ses{B\otimes L^{-1}(\alpha)}{E}{L(-\alpha)}.
\end{equation*}
And the obstruction
group $H^1\paren{\sshf{\Sigma}(\Sigma)}\iso H^0\paren{C, K_C\otimes B\otimes L^{-2}(\alpha)}^*$.
\end{proposition}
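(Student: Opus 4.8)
The plan is to prove the two assertions in turn: the first by the usual dictionary between sections of the ruled surface $X=\mathbb{P}(E)$ and rank-one quotients of $E$, the second by a cohomological computation which, via $H^2(X,\sshf{X})=0$, reduces to a statement on the curve $C$.

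\emph{Identifying the extension.} Since $\sshf{X}(\Sigma)\iso\sshf{X}(1)\otimes\pi^*\paren{B^{-1}\otimes L}$ and $\textrm{Pic}(X)=\mathbb{Z}\,\sshf{X}(1)\oplus\pi^*\textrm{Pic}(C)$, the identity $\Sigma=\Gamma+\alpha f$ forces $\sshf{X}(\Gamma)\iso\sshf{X}(1)\otimes\pi^*\paren{B^{-1}\otimes L(-\alpha)}$. Write $\sigma$ as a rank-one quotient $E\twoheadrightarrow\mathcal{L}$ with saturated kernel $\mathcal{M}$, so $\mathcal{M}\otimes\mathcal{L}\iso\det E\iso B$ and, under the isomorphism $\Gamma\iso C$ induced by $\pi$, one has $\sshf{X}(1)\big|_\Gamma\iso\mathcal{L}$. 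Now restrict $\sshf{X}(\Gamma)$ to $\Gamma$ in two ways: on one hand it is $\mathcal{L}\otimes B^{-1}\otimes L(-\alpha)$; on the other hand it is the normal bundle $N_{\Gamma/X}$, which by adjunction together with the relative canonical formula $\omega_{X/C}\iso\sshf{X}(-2)\otimes\pi^*\det E$ (whence $\omega_X\big|_\Gamma\iso\mathcal{L}^{-2}\otimes B\otimes K_C$) equals $\omega_\Gamma\otimes\paren{\omega_X\big|_\Gamma}^{-1}\iso\mathcal{L}^{\otimes 2}\otimes B^{-1}$. Comparing the two expressions gives $\mathcal{L}\iso L(-\alpha)$ and hence $\mathcal{M}\iso B\otimes L^{-1}(\alpha)$, which is the asserted extension.

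\emph{Computing the obstruction group.} Tensoring the structure sequence of $\Sigma$ by $\sshf{X}(\Sigma)$ gives $\ses{\sshf{X}}{\sshf{X}(\Sigma)}{\sshf{\Sigma}(\Sigma)}$, and since $X$ is ruled, $H^2(X,\sshf{X})=0$; hence $H^1\paren{\sshf{\Sigma}(\Sigma)}$ is the cokernel of $H^1(X,\sshf{X})\to H^1\paren{X,\sshf{X}(\Sigma)}$. Applying $\pi_*$ and using $\pi_*\sshf{X}(1)\iso E$ and $R^1\pi_*\sshf{X}(1)=0$, this becomes a map $H^1(C,\sshf{C})\to H^1\paren{C,E\otimes B^{-1}\otimes L}$ induced by a morphism $u\colon\sshf{C}\to E\otimes B^{-1}\otimes L$, namely the image of the defining section of $\sshf{X}(\Sigma)$ under $H^0\paren{X,\sshf{X}(\Sigma)}\iso H^0\paren{C,E\otimes B^{-1}\otimes L}$.

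The step I expect to be the main obstacle is the identification of $u$. I would argue fibrewise: for $c\notin\supp{\alpha}$, the defining section of $\sshf{X}(\Sigma)$ restricts on the fibre $X_c=\pi^{-1}(c)$ to a nonzero section of a line bundle of degree one vanishing precisely at $\sigma(c)$, so by the section--quotient dictionary $u(c)\in\paren{E\otimes B^{-1}\otimes L}_c$ spans the line $\mathcal{M}_c\otimes\paren{B^{-1}\otimes L}_c$. As $\mathcal{M}\otimes B^{-1}\otimes L$ is a sub-bundle of $E\otimes B^{-1}\otimes L$, the map $u$ factors as $\sshf{C}\to\mathcal{M}\otimes B^{-1}\otimes L\embedding E\otimes B^{-1}\otimes L$, the first arrow being a section whose zero divisor is the fibre part $\alpha$ of $\Sigma$; hence $\mathcal{M}\otimes B^{-1}\otimes L\iso\sshf{C}(\alpha)$. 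Twisting the extension of the first part by $B^{-1}\otimes L$ now reads $\ses{\sshf{C}(\alpha)}{E\otimes B^{-1}\otimes L}{B^{-1}\otimes L^{\otimes 2}(-\alpha)}$, with $u$ the composite of the tautological section $\sshf{C}\to\sshf{C}(\alpha)$ with this inclusion. From $\ses{\sshf{C}}{\sshf{C}(\alpha)}{\sshf{\alpha}(\alpha)}$ and the vanishing $H^1\paren{\sshf{\alpha}(\alpha)}=0$ (the sheaf being supported on points), the map $H^1(\sshf{C})\to H^1\paren{\sshf{C}(\alpha)}$ is surjective, so the image of $H^1(\sshf{C})$ in $H^1\paren{E\otimes B^{-1}\otimes L}$ agrees with that of $H^1\paren{\sshf{C}(\alpha)}$; the long exact sequence of the twisted extension, together with $H^2$ of a curve being zero, then identifies the cokernel with $H^1\paren{C,B^{-1}\otimes L^{\otimes 2}(-\alpha)}$. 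Serre duality on $C$ finally gives $H^1\paren{\sshf{\Sigma}(\Sigma)}\iso H^0\paren{C,K_C\otimes B\otimes L^{-2}(\alpha)}^*$, as claimed.
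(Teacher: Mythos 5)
Your proof is correct, and for the main cohomological computation it takes a genuinely different route from the paper's. For the first assertion the paper simply quotes Hartshorne V.2.6, namely $\pi^*N\iso\sshf{X}(1)\otimes\sshf{X}(-\Gamma)$, to read off the sub-line-bundle $N\iso B\otimes L^{-1}(\alpha)$ directly, whereas you recover the quotient by comparing $\sshf{X}(\Gamma)\big|_\Gamma$ with the normal bundle via adjunction and $\omega_{X/C}\iso\sshf{X}(-2)\otimes\pi^*\det E$; both rest on the same section--quotient dictionary and are equivalent. For the obstruction group the paper argues on the surface: it peels the fibre components off $\Sigma$ one at a time, using the sequences $\ses{\sshf{\Sigma_i}}{\sshf{\Sigma_{i-1}}\oplus\sshf{a_if}}{\sshf{Z_i}}$ and the vanishing of $H^1$ on (thickened) fibres to show each step leaves $H^1\paren{\sshf{X}(\Sigma)\big|_{\Sigma_i}}$ unchanged, reducing to $H^1\paren{C, B^{-1}\otimes L^{2}(-\alpha)}$ on the section $\Gamma$ and then applying Serre duality. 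You instead use $\ses{\sshf{X}}{\sshf{X}(\Sigma)}{\sshf{\Sigma}(\Sigma)}$, kill $H^2(X,\sshf{X})$, push down to $C$ by Leray (using $R^1\pi_*=0$ for both sheaves), and identify the resulting map on $H^1$ as induced by $u\colon\sshf{C}\rightarrow E\otimes B^{-1}\otimes L$, which factors through the subbundle $\sshf{C}(\alpha)\iso \paren{B\otimes L^{-1}(\alpha)}\otimes B^{-1}\otimes L$; the twisted extension then exhibits the cokernel as $H^1\paren{C, B^{-1}\otimes L^{2}(-\alpha)}$, and Serre duality finishes. Your route handles the (possibly non-reduced) fibre part $\alpha f$ uniformly --- only the invertible sheaf $\sshf{X}(\Sigma)$ and its defining section enter, so no analysis of $\sshf{a_if}$ or of the scheme-theoretic intersections $Z_i$ is needed --- and it makes transparent that the answer depends only on $B^{-1}\otimes L^{2}(-\alpha)$; the paper's induction is more elementary and locates the obstruction explicitly on $\Gamma$. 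Two points worth making explicit in your write-up: the factorization of $u$ through the sub-line-bundle at points of $\supp{\alpha}$ follows because the composite to the quotient line bundle vanishes on a dense open set and the quotient is torsion-free; and the precise zero divisor of $\sshf{C}\rightarrow\sshf{C}(\alpha)$ is immaterial --- all you use is that its cokernel is a torsion sheaf, so $H^1(\sshf{C})\rightarrow H^1(\sshf{C}(\alpha))$ is surjective.
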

\begin{proof}
$\Gamma$ arises from some one dimensional quotient of $E$
\begin{equation*}
    \ses{N}{E}{M}.
\end{equation*}
By \cite[V, 2.6]{HA}, $\pi^* N\iso \sshf{X}(1)\otimes\sshf{X}(-\Gamma)$, which is isomorphic to $\pi^*\paren{B\otimes L^{-1}(\alpha)}$, therefore $N\iso B\otimes L^{-1}(\alpha)$. Since $\det E\iso B$, one has $M\iso L(-\alpha)$.

Assume $\alpha=\sum^m_{i=1}a_ip_i$, where $a_i\in \mathbb{N}$ and $p_i\in C$. Let $\Sigma_i=\Gamma+\sum_{j\le i}a_jf$, where $a_if$ is $\pi^{-1}(a_ip_i)$. In this notation $\Sigma_0=\Gamma, \Sigma_m=\Sigma$. Consider the exact sequence
\begin{equation*}
    \ses{\sshf{\Sigma_{i}}}{\sshf{\Sigma_{i-1}}\oplus\sshf{a_i f}}{\sshf{Z_i}},
\end{equation*}
where $Z_i$ is the scheme theoretic intersection of $\Sigma_{i-1}$ with $a_if$.
Tensoring it with $\shf{L}:=\sshf{X}(1)\otimes\pi^*(B^{-1}\otimes L)\iso\sshf{X}(\Sigma)$, one obtains
\begin{equation*}
    \cdots\rightarrow H^0(\shf{L}|_{\Sigma_{i-1}})\oplus H^0(\shf{L}|_{a_i f})\rightarrow H^0(\shf{L}|_{Z_i})\rightarrow H^1(\shf{L}|_{\Sigma_i})\rightarrow H^1(\shf{L}|_{\Sigma_{i-1}})\oplus H^1(\shf{L}|_{a_i f})\rightarrow 0.
\end{equation*}

On the one hand, denote the ideal sheaf of $\pi^{-1}(p_i)\iso\prj{1}$ by $\is{}$. For $k\ge 1$, there exists the sequence
\begin{equation*}
    \ses{\is{}^{k-1}/{\is{}^{k}}}{\sshf{X}/{\is{}^{k}}}{\sshf{X}/{\is{}^{k-1}}}.
\end{equation*}
By flatness of $\pi$ and smoothness of fibre $f$, $\is{}^{k-1}/{\is{}^{k}}\iso \sshf{\prj{1}}$. One obtains
$H^1(\shf{L}|_{kf})\iso H^1(\shf{L}|_{(k-1)f})\iso\cdots H^1(\shf{L}|_{f})\iso H^1(\prj{1}, \sshf{\prj{1}}(1))=0$. In particular, $H^1(\shf{L}|_{a_i f})=0$.

On the other, $H^0(\shf{L}|_{a_i f})\rightarrow H^0(\shf{L}|_{Z_i})$ is surjective, since its cokernel $H^1(a_if, \shf{L}(-\Gamma)|_{a_if})=0$ by a similar argument as above.

So $H^1(\shf{L}|_{\Sigma_i})\iso H^1(\shf{L}|_{\Sigma_{i-1}})$. Inductively, one gets
\begin{eqnarray*}
    H^1(\sshf{\Sigma}(\Sigma)) &\iso & H^1(\shf{L}|_{\Sigma_m})\\
    &\iso & H^1\paren{\shf{L}|_{\Gamma}}\\
    &\iso& H^1\paren{C, \sigma^*\paren{\sshf{X}(1)\otimes\pi^*\paren{B^{-1}\otimes L}}}\\
    &\iso& H^1\paren{C, B^{-1}\otimes L^2(-\alpha)}\\
    &\iso& H^0\paren{C, K_C\otimes B\otimes L^{-2}(\alpha)}^*,
\end{eqnarray*}
and we are done.
\end{proof}

\begin{remark}\label{non semi-regular locus}
The obstruction group indicates that if $2\deg{L}<g+d_0-1$, then $\sshf{X}(1)\otimes \pi^*(B^{-1}\otimes L)$ is not semi-regular. For instance, take $g=3$, $d_0=3$ and $\eta\in S^1 C$. In this case $W^0_{1, -1}\neq\emptyset$, but none of its point is semi-regular. This produces many examples of effective line bundle locus on Pic($X$) which does not contain any semi-regular line bundle.
\end{remark}

\begin{proposition}\label{irreducibility}
Assume $H^0\paren{C, B^{-1}\otimes L}=0$ and $\eta\in X_{|L|}$. Then there exists a reducible $\Sigma\in |\sshf{X}(1)\otimes\pi^*(B^{-1}\otimes L)|$ if and only if there exists a line bundle $L'\subsetneq L$, such that $\eta\in X_{|L'|}$.
\end{proposition}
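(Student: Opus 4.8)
The plan is to reduce both directions to the Effectiveness Criterion (Proposition~\ref{characterization}), applied not to $L$ but to the smaller line bundle $L'$, together with the standard shape of effective divisors on the ruled surface $X$; the hypothesis $H^0(C,B^{-1}\otimes L)=0$ is exactly what excludes an otherwise competing case. First I would record the structure of an arbitrary effective $\Sigma\in|\sshf{X}(1)\otimes\pi^*(B^{-1}\otimes L)|$: for any line bundle $N$ on $C$ the restriction of $\sshf{X}(1)\otimes\pi^*N$ to a fibre $f$ is $\sshf{\mathbb{P}^1}(1)$, so $\Sigma\cdot f=1$, whence $\Sigma$ has a unique horizontal component, occurring with multiplicity one, and $\Sigma=\Gamma+\alpha f$ with $\Gamma$ an integral curve satisfying $\Gamma\cdot f=1$ and $\alpha\ge 0$ effective on $C$. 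Since $\pi|_\Gamma\colon\Gamma\to C$ is finite of degree one onto the normal curve $C$, it is an isomorphism, so $\Gamma$ is the image of a section and, being isomorphic to $C$, is irreducible (I would cite \cite[V.2]{HA} for these facts). Consequently $\Sigma$ is reducible if and only if $\alpha\ne 0$.

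For the implication $(\Rightarrow)$ I would take a reducible $\Sigma=\Gamma+\alpha f$ with $\alpha>0$ and set $L':=L(-\alpha)$, a proper sub-line-bundle of $L$. Then $\sshf{X}(\Gamma)\cong\sshf{X}(1)\otimes\pi^*(B^{-1}\otimes L')$, so $H^0(X,\sshf{X}(1)\otimes\pi^*(B^{-1}\otimes L'))\ne 0$. Applying Proposition~\ref{characterization} with $M=B^{-1}\otimes L'$ yields one of two alternatives: either $H^0(C,B^{-1}\otimes L')\ne 0$, or $L'$ is effective and $\eta\in X_{|L'|}$. The first contradicts the hypothesis, since $B^{-1}\otimes L\cong(B^{-1}\otimes L')\otimes\sshf{C}(\alpha)$ would then be effective, i.e. $H^0(C,B^{-1}\otimes L)\ne 0$. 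Hence $\eta\in X_{|L'|}$.

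For $(\Leftarrow)$ I would start from $L'\subsetneq L$ with $\eta\in X_{|L'|}$, write $L=L'\otimes\sshf{C}(\alpha)$ with $\alpha>0$ effective, and note that $L'$ is effective since $\eta\in X_{|L'|}$ forces $|L'|\ne\emptyset$. The second alternative of Proposition~\ref{characterization} (with $M=B^{-1}\otimes L'$ and the effective line bundle $L'$) then gives $H^0(X,\sshf{X}(1)\otimes\pi^*(B^{-1}\otimes L'))\ne 0$; choosing an effective divisor $\Gamma$ in this system, $\Sigma:=\Gamma+\alpha f$ lies in $|\sshf{X}(1)\otimes\pi^*(B^{-1}\otimes L)|$, and decomposing $\Gamma$ as in the first step exhibits $\Sigma$ as (an irreducible section) plus a nonzero effective sum of fibres, hence reducible.

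The step I expect to require the most care is the first one, namely verifying that a divisor meeting every fibre once is literally a section plus a pullback with irreducible horizontal part; this is classical for ruled surfaces but should be stated cleanly. I would also fix the convention behind $X_{|L'|}$ so that ``$L'\subsetneq L$'' means a proper inclusion of line bundles, equivalently $L=L'\otimes\sshf{C}(\alpha)$ with $\alpha$ a nonzero effective divisor — that is precisely what makes the two directions dovetail. The standing hypothesis $\eta\in X_{|L|}$ is then automatically consistent, since $X_{|L'|}\subseteq X_{|L|}$ and the existence of any reducible $\Sigma$ already forces $H^0(X,\sshf{X}(1)\otimes\pi^*(B^{-1}\otimes L))\ne 0$, hence $\eta\in X_{|L|}$ by Proposition~\ref{characterization}.
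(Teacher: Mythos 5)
Your proof is correct and takes essentially the same route as the paper's: both directions reduce to Proposition~\ref{characterization} applied to $L'=L(-\alpha)$, with the hypothesis $H^0(C,B^{-1}\otimes L)=0$ ruling out the first alternative in the forward direction and the reducible divisor in the converse produced as $\Sigma+\alpha f$. Your preliminary step justifying the decomposition $\Sigma=\Gamma+\alpha f$ with $\Gamma$ a section is detail the paper leaves implicit (it is the standard ruled-surface fact from \cite[V.2]{HA}, already used in Proposition~\ref{obstruction group}), so it adds rigor without changing the argument.
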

\begin{proof}
Let $\Sigma=\Gamma+\alpha f\in |\sshf{X}(1)\otimes\pi^*(B^{-1}\otimes L)|$ for some effective $\alpha$ with $\deg{\alpha}\ge 1$. Then
$\sshf{X}(\Gamma)\iso \sshf{X}(1)\otimes \pi^*(B^{-1}\otimes L(-\alpha))$. Notice $H^0\paren{C, B^{-1}\otimes L(-\alpha)}=0$, so by Corollary \ref{characterization}, $X_{|L(-\alpha)|}\owns\eta$.

Conversely let $L'\iso L(-\alpha)$ for some effective $\alpha$ with $\deg{\alpha}\ge 1$ and $X_{|L'|}\owns\eta$, then $H^0\paren{\sshf{X}(1)\otimes\pi^*\paren{B^{-1}\otimes L'}}\neq 0$. Let $\Sigma\in |\sshf{X}(1)\otimes\pi^*(B^{-1}\otimes L')| $, then the reducible divisor $\Sigma+\alpha f\in|\sshf{X}(1)\otimes\pi^*(B^{-1}\otimes L)|$.
\end{proof}

To state the example below, we define the set
\begin{equation*}
    X^i_{|L|}=\bigcup_{\substack{h^0(L(-D))\ge 1\\ \deg{D}=\deg{L}-i}}\lsp{D},
\end{equation*}
for $i\ge 0$, namely the set swept out by linear spans of all degree $(\deg{L}-i)$ sub-divisors of $|L|$. The inclusion $X^{i+1}_{|L|}\subseteq X^i_{|L|}$ is clear.

\begin{corollary}\label{semi-regularity}
With the above notation, assume $H^0(C, B^{-1}\otimes L)=0$ and $\eta\in X_{|L|} \backslash X^1_{|L|}$. Then $\sshf{X}(1)\otimes\pi^*(B^{-1}\otimes L)$ is semi-regular if and only if $H^0(C, K_C\otimes B\otimes L^{-2})=0$.\qed
\end{corollary}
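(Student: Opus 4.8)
The plan is to combine the Semi-regularity Criterion (Proposition \ref{obstruction group}) with the Effectiveness Criterion (Proposition \ref{characterization}) and the observation, recalled just before Proposition \ref{obstruction group}, that on a ruled surface $X$ a divisor $\Sigma$ is semi-regular precisely when $H^1(\sshf{\Sigma}(\Sigma))=0$. Write $\shf{M}=\sshf{X}(1)\otimes\pi^*(B^{-1}\otimes L)$. Since $H^0(C,B^{-1}\otimes L)=0$ and $\eta\in X_{|L|}$, Proposition \ref{characterization} (case (2)) gives $H^0(X,\shf{M})\neq 0$, so $\shf{M}$ is effective; thus $[\shf{M}]\in W^0_{1,\star}(X)$ iff $\shf{M}$ is semi-regular. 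By definition, semi-regularity of $\shf{M}$ means $H^1(\sshf{\Sigma}(\Sigma))=0$ for \emph{every} $\Sigma\in|\shf{M}|$.

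The key step is to run the obstruction-group computation of Proposition \ref{obstruction group} uniformly over all $\Sigma\in|\shf{M}|$. The hypothesis $\eta\in X_{|L|}\backslash X^1_{|L|}$ forces every effective divisor in $|\shf{M}|$ to be \emph{irreducible}: indeed, by Proposition \ref{irreducibility}, a reducible member $\Sigma=\Gamma+\alpha f$ with $\deg\alpha\ge 1$ would produce $L'=L(-\alpha)\subsetneq L$ with $\eta\in X_{|L'|}$, hence $\eta\in X^1_{|L|}$ (taking $D$ with $\deg D=\deg L-1$ refining such an $\alpha$), contradicting the assumption. So every $\Sigma\in|\shf{M}|$ is of the form $\Gamma$, the image of a section $\sigma:C\to X$, with $\alpha=0$. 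Applying Proposition \ref{obstruction group} with $\alpha=0$ yields
\begin{equation*}
    H^1(\sshf{\Sigma}(\Sigma))\iso H^0(C,K_C\otimes B\otimes L^{-2})^*
\end{equation*}
for every $\Sigma\in|\shf{M}|$; the right-hand side does not depend on the particular $\Sigma$. Therefore $\shf{M}$ is semi-regular if and only if $H^0(C,K_C\otimes B\otimes L^{-2})^*=0$, i.e. $H^0(C,K_C\otimes B\otimes L^{-2})=0$, which is the assertion.

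The main obstacle is the reducibility bookkeeping: one must be careful that ruling out \emph{all} reducible members of $|\shf{M}|$ really follows from $\eta\notin X^1_{|L|}$ rather than only from $\eta\notin X_{|L'|}$ for a single $L'$. The point is that if some $\Sigma=\Gamma+\alpha f$ with $\deg\alpha\ge 1$ exists, then (after possibly replacing $\alpha$ by a degree-$1$ subdivisor $D$ of it and using $H^0(B^{-1}\otimes L(-D))=0$, which holds since $H^0(B^{-1}\otimes L)=0$) we get $\eta\in\lsp{D}\subseteq X^1_{|L|}$, via the equivalence $\eta\in\lsp{D}\iff\delta(s)=0$ from Lemma \ref{Kernel} together with Proposition \ref{characterization}. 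Once this is in place, the rest is the direct specialization $\alpha=0$ of Proposition \ref{obstruction group}, and no further computation is needed.
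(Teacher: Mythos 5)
Your proposal is correct and follows exactly the route the paper intends for this (unproved, \qed'ed) corollary: effectiveness via Proposition \ref{characterization}, exclusion of vertical components from $\eta\notin X^1_{|L|}$ via Proposition \ref{irreducibility} together with the inclusion $X_{|L(-\alpha)|}\subseteq X^1_{|L|}$, and then Proposition \ref{obstruction group} with $\alpha=0$. Only a small notational slip in your final parenthetical: what you obtain is $\eta\in X_{|L(-D)|}\subseteq X^1_{|L|}$ (the spans of divisors $D''\in|L(-D)|$, which have degree $\deg L-1$ and satisfy $h^0(L(-D''))\ge 1$), not $\eta\in\lsp{D}$ for the degree-one divisor $D$ itself.
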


\subsection{Example}\mbox{}\\

Fix $d_0\ge 3$ as before. Choose positive integers $g$, $d$ satisfying
\begin{enumerate}
  \item $g+d_0+1\le 2d\le 2g$,
  \item $2d-d_0$ is a prime number.
\end{enumerate}
We can construct a ruled surface $X$ over a curve of genus $g$, such that one component of $W^0_{1, d-d_0}(X)$ of dimension $(2d-d_0-g+1)$ satisfies the rules proposed at the beginning of \S 3.
\vspace{0.3cm}

To be precise, take a \emph{general} curve $C$ of genus $g$  (hence all Brill-Noether loci $W^r_d(C)$ involved below have the expected dimensions $\rho=g-(r+1)(g-d+r)$ ). Recall for any $B$ of degree $d_0$, $|K_C\otimes B|$ induces an embedding $\varphi: C\hookrightarrow\prj{N}$ with $N=g+d_0-2$. Then
\vspace{0.3cm}

\textbf{Claim A}: There exist $B\in\textrm{Pic}^{d_0}(C)$ and $L\in \textrm{Pic}^d(C)$ satisfying the conditions:
\begin{enumerate}
 \item[A.1] $h^0(B^{-1}\otimes L)=0$,
 \item[A.2] $h^0(L)=2$,
 \item[A.3] $L$ is base point free,
 \item[A.4] $h^0\paren{K_C\otimes B\otimes L^{-2}(p)}=0$ for all $p\in C$,
 \item[A.5] Let $\Lambda=\cap_{D\in |L|} \lsp{D}$, then $\Lambda\backslash X^2_{|L|}\neq\emptyset$.
 \end{enumerate}

\vspace{0.3cm}

\textbf{Claim B}: Fix  $B\in\textrm{Pic}^{d_0}(C)$. Then for general points $q_1, \cdots, q_d$ on $C$, the following conditions hold
\begin{itemize}
 \item[B.1] $h^0(B^{-1}(q_1+\cdots+q_{d}))=0$,
 \item[B.2] $h^0\paren{\sshf{C}(q_1+\cdots+q_{d})}=1$,
 \item[B.3] $h^0(K_C\otimes B(-2q_1-\cdots-2q_{d}))=0$.
\end{itemize}
Moreover if $\eta\in\Lambda\backslash X^2_{|L|}$ is chosen properly, there is a nonempty (locally closed) subset $W$ of the $d$-th symmetric product of $C$ such that for $q_1+\cdots+q_d\in W$,
\begin{itemize}
  \item [B.4] $\eta\in\lsp{q_1+\cdots+q_d}$, but $\eta\notin\lsp{q_1+\cdots+\hat{q_i}+\cdots+q_d}$ for all $i$.
\end{itemize}
Here the notation $\hat{q_i}$ means omit $q_i$.
\vspace{0.3cm}

Granting the claims for the moment, the quadruple $(C, B, \eta, L)$ chosen above determines a ruled surface $\pi: X=\mathbb{P}(E)\rightarrow C$. Let $\shf{L} =\sshf{X}(1)\otimes \pi^*\paren{B^{-1}\otimes L}$. Then $h^0(X, \shf{L})=2$ by A.1, A.2, A.5 and (\ref{number of sections}). Moreover $\shf{L}$ is semi-regular. In fact, writing a divisor $\Sigma\in |\shf{L}|$ as $\Gamma+\alpha f$ as in Proposition \ref{obstruction group}, one has
\begin{equation*}
    \sshf{X}(\Gamma)\iso \sshf{X}(1)\otimes\pi^*\paren{B^{-1}\otimes L(-\alpha)}.
\end{equation*}
Since $h^0\paren{B^{-1}\otimes L(-\alpha)}=0$, $\eta\in X_{|L(-\alpha)|}$ by Proposition \ref{characterization}. A.5 thereby implies $\deg{\alpha}\le 1$. Therefore one can use Proposition \ref{obstruction group} and A.4 to deduce the semi-regularity of $\shf{L}$. So $\shf{L}\in W^0_{1, d-d_0}(X)$.

On the other hand, for points $q_1, \cdots, q_d$ satisfying B.1-B.4, $\shf{L}':=\sshf{X}(1)\otimes \pi^*B^{-1}(q_d+\cdots+q_d)$ is semi-regular (B.1, B.3, B.4 and Corollary \ref{semi-regularity}), and has one dimensional sections (B.1, B.2, B.4). Because of the irreducibility of $W^0_d(C)$, $\shf{L}'$ specializes to the component $\Omega\subset W^0_{1, d-d_0}(X)$ which contains $\shf{L}$. Therefore, the induced Abel-Jacobi map
$\varphi: \textrm{Div} X\rightarrow \Omega$ is a birational morphism, and hence
\begin{eqnarray*}
    \dim \Omega &=&R\\
                &=&\chi(\shf{L})-1+q(X) \hspace{3cm}\text{by}\;(\ref{expected dimension})\\
                &=& 2d-d_0+1-g.         \hspace{3cm}\text{by the Riemann-Roch for}\;\pi_*\shf{L}
\end{eqnarray*}
For $\shf{L}$, $b=r+1-\chi(\shf{L})=2g+d_0-2d$, as no $H^2$ involved. We apply Corollary \ref{Riemann-Kempf} to get that the multiplicity $\mu$ of $\Omega$ at $[\shf{L}]$ equals $2g+d_0-2d$, which is larger than 1. Our main theorem asserts that $\Omega$ has at worst rational singularities.

\subsection{Proofs of Claims}\mbox{}\\

The proof for A.1-A.4 proceeds by dimension counting, while for A.5, namely $X^2_{|L|}\neq\Lambda$, we need a delicate computation of cohomologies.

\begin{proof}[Proof of Claim A]
We first look for a pair $(B, L)$ with the constraints A.1-A.4 all satisfied. For this purpose, we define the morphisms
\begin{eqnarray*}
     m_B:     & \textrm{Pic}^d(C)\rightarrow \textrm{Pic}^{d+d_0}(C)                      & \quad M\mapsto M\otimes B,\\
    \gamma:   & \textrm{Pic}^{d}(C)\rightarrow \textrm{Pic}^{2d}(C)                       &   \quad M\mapsto M^{\otimes 2},\\
    \alpha:   &  W^1_{d-1}\times C\rightarrow \textrm{Pic}^d(C)                           & \quad (M, p)\mapsto M(p),\\
    \beta:    &  W^{2d-g-d_0}_{2d-d_0-1}\times C\rightarrow\textrm{Pic}^{2d-d_0}(C)       & \quad (M, p)\mapsto M(p).
\end{eqnarray*}

A.1 and A.2 are to say that $L\in W^1_d\backslash m_B(W^0_{d-d_0})$; A.3 is to say $L\notin \textrm{im}(\alpha)$. By Riemman-Roch and Serre duality, A.4 is translated to the condition:
\begin{equation*}
   h^0(B^{-1}\otimes L^2(-p))=2d-g-d_0 \quad \text{for all}\; p\in C,
\end{equation*}
which is in turn equivalent to
\begin{equation*}
    B^{-1}\otimes L^2\quad\text{is base point free with}\;\; h^0(B^{-1}\otimes L^2)=2d-g-d_0+1.
\end{equation*}

So $m_{B^{-1}}\circ\gamma(L)\notin W^{2d-g-d_0+1}_{2d-d_0}\cup\textrm{Im}(\beta)$.

Note $\dim{\textrm{Im}(\alpha)}<\dim W^1_d$, so to attain A.1-A.4 simultaneously, it suffices to choose $B$ such that
\begin{equation}\label{desired containment relation}
    W^1_{d}\nsubseteq \gamma^{-1}\paren{m_B \paren{W^{2d-g-d_0+1}_{2d-d_0}\cup\textrm{Im}\beta}}\cup m_B\paren{W^0_{d-d_0}},
\end{equation}
see the diagram
$$\xymatrix{
                                  &         W^{2d-g-d_0+1}_{2d-d_0}\ar@{^{(}->}[d]         &                   &               W^0_{d-d_0}\ar[d]^{m_B}  &  \\
W^{2d-g-d_0}_{2d-d_0-1}\times C\ar[r]^{\beta}    &     \textrm{Pic}^{2d-d_0}(C)\ar[r]^{m_B}               &  \textrm{Pic}^{2d}(C) & \textrm{Pic}^{d}(C)\ar[l]_{\gamma} & W^{1}_{d-1}\times C\ar[l]_{\alpha}
}$$
\vspace{0.3cm}

Regarding $\textrm{Pic}^{d}(C)$ as a homogeneous space with the group $\textrm{Pic}^0(C)$ acting in the obvious way, we apply the Kleiman's transversality \cite[Theorem 2]{KL2} to get that, for generic $B\in\textrm{Pic}^{d_0}(C)$, the intersection of $\gamma^{-1}\paren{m_B \paren{W^{2d-g-d_0+1}_{2d-d_0}\cup\textrm{Im}\beta}}\cup m_B\paren{W^0_{d-d_0}}$ with $W^1_{d}$ has the expected dimension, which is less than $\dim W^1_d=2d-g-2$. Thereby (\ref{desired containment relation}) holds for generic $B\in\text{Pic}^d_0(C)$, consequently $L$ has $(2d-g-2)$ dimensional freedom of choice for a fixed $B$.
\vspace{0.3cm}

Then for A.5, we first show that $\Lambda\iso\prj{2d-g-d_0}$.
\vspace{0.3cm}

By Proposition \ref{chariterion of resolution of singularities}, $\Lambda=\lsp{D_1}\cap\lsp{D_2}$ for any distinct $D_1, D_2\in |L|$. Observe that\\
$H^0(\is{{\lsp{D_1}\cap\lsp{D_2}}/\prj{N}}(1))$ is given by
 $\text{Im} (H^0(K_C\otimes B(-D_1))\oplus H^0(K_C\otimes B(-D_2))\rightarrow H^0(K_C\otimes B)))$. To calculate it, we use the base point free pencil trick for $L$. The short exact sequence
\begin{equation*}
    \ses{\sshf{C}(-D_1-D_2)}{\sshf{C}(-D_1)\oplus\sshf{C}(-D_2)}{\sshf{C}}
\end{equation*}
yields
\begin{eqnarray*}
    &&\dim\text{Im} (H^0(K_C\otimes B(-D_1))\oplus H^0(K_C\otimes B(-D_2))\rightarrow H^0(K_C\otimes B))\\
    &=& 2h^0(K_C\otimes B\otimes L^{-1})-h^0(K_C\otimes B\otimes L^{-2})\\
    &=& 2(g+d_0-d-1),
\end{eqnarray*}
which implies $\lsp{D_1}\cap\lsp{D_2}\iso\prj{2d-g-d_0}$.
\vspace{0.3cm}

To prove $\Lambda\backslash X^2_{|L|}\neq\emptyset$, it suffices to show  for a \emph{general} degree $d-2$ divisor $Z$ with $h^0(L(-Z))>0$,  $\dim(\lsp{Z}\cap\Lambda)= 2d-g-d_0-2$.
\vspace{0.3cm}

To this end, we fix $D_0\in |L|$, assume $Z+p+q=D\in |L|$ for some points $p, q\in C$ and that $D\neq D_0$. $\dim\lsp{Z}\cap\Lambda=2d-g-d_0-2$ if and only if the image of the diagonal map
$$\xymatrix{
H^0(K_C\otimes B(-D_0)) \ar[d] \ar@{^{(}->}[r] \ar@{-->}[dr] & H^0(K_C\otimes B)\ar[d]\\
H^0(K_C\otimes B(-D_0)|_Z)\ar@{^{(}->}[r] & H^0(K_C\otimes B|_Z)}$$
is $(g+d_0-d-1)$ dimensional as a vector space.

From the exact sequence
\begin{equation*}
    0\rightarrow H^0(K_C\otimes B (-D_0-Z))\rightarrow H^0(K_C\otimes B(-D_0))\rightarrow H^0(K_C\otimes B(-D_0)|_Z),
\end{equation*}
and the fact $h^0(K_C\otimes B(-D_0))=g+d_0-d-1$, we see this happens if and only if $h^0(K_C\otimes B(-D_0-Z))=h^0\paren{K_C\otimes B\otimes L^{-2}(p+q)}=0$.

By A.4 and its reformulation, $B^{-1}\otimes L^2$ is base point free and $h^0\paren{K_C\otimes B\otimes L^{-2}(p)}=0$. Therefore $h^0\paren{K_C\otimes B\otimes L^{-2}(p+q)}=0$ if and only if $|B^{-1}\otimes L^2|$ separates $p$ and $q$.

Denote the image of the map $C\xrightarrow{|B^{-1}\otimes L^2|}\prj{N'}$ as $C'$. Obviously $C'$ is not $\prj{1}$. Since $\deg\paren{B^{-1}\otimes L^2}=2d-d_0$ is prime, the induced map $C\rightarrow C'$ cannot be a finite morphism of degree $\ge 2$, and hence $C\rightarrow C'$ is birational. The number of pairs $(p, q)$ which $B^{-1}\otimes L^2$ cannot separate is finite.
\end{proof}
\vspace{0.3cm}

\begin{proof}[Proof of Claim B (Sketch)]
We first pick two distinct points $q_1, q_2$ such that $h^0(K_C\otimes B(-2q_1-2q_2))=h^0(K_C\otimes B)-4$, then proceed by induction on the number of points.
Suppose $q_1, \cdots, q_i$ for $2\le i\le d-1$ have been picked with the conditions
\begin{itemize}
  \item $h^0(B^{-1}\paren{q_1+\cdots+q_i)}=0$,
  \item $h^0\paren{\sshf{C}(q_1+\cdots+q_i)}=1$,
  \item $h^0\paren{K_C\otimes B(-2q_1-\cdots-2q_i)}=\max{\{h^0(K_C\otimes B)-2i, 0\}}$.
\end{itemize}
If $q_{i+1}$ is chosen by avoiding $C\cap \lsp{q_1+\cdots+q_i}$ and any inflectionary points (cf. \cite[p. 37]{ACGH}) of $|K_C(-q_1-\cdots-q_i)|$ and $|K_C\otimes B(-2q_1-\cdots-2q_i)|$, which are finite, then the conditions still hold for $i+1$. In this process, we need the assumption $g+d_0+1\le 2d\le 2g$ to guarantee that neither $|K_C(-q_1-\cdots-q_i)|$ nor $|K_C\otimes B(-2q_1-\cdots-2q_i)|$ is empty.

The second part of the claim is quite obvious and we omit its proof.
\end{proof}

\section{Appendix}
In the section, we review the construction of $X_{|L|}$, the variety swept out by all linear spans of divisors in $|L|$ on a curve $C$ with respect to an embedding $C\subset\prj{N}$.
\vspace{0.3cm}

Assume $A$ is a very ample line bundle on the curve $C$ with genus $g\ge 1$. Let $V$ denote $H^0(C, A)$. Given an effective line bundle $L$ of degree $d$ with $\dim |L|=r$. Denote the two projections from $C\times |L|$ by $p$ and $q$. There exists the sequence
\begin{equation*}
   \ses{p^*L^{-1}\otimes q^*\sshf{|L|}(-1)}{\sshf{C\times |L|}}{\sshf{\mathscr{D}}},
\end{equation*}
where $\mathscr{D}$ is the universal divisor, see (\ref{universal family}).

Applying $q_*(p^* A\otimes \_)$ to the above, we get the exact sequence
\begin{eqnarray*}\label{segre}
    &&0\rightarrow {H^0\paren{C, A\otimes L^{-1}}\otimes \sshf{|L|}(-1)}\rightarrow{V\otimes\sshf{|L|}}\rightarrow{q_*\paren{p^* A\otimes \sshf{D}}}\rightarrow
    {H^1\paren{C, A\otimes L^{-1}}\otimes\sshf{|L|}(-1)}\\
    &&\rightarrow H^1\paren{C, A}\otimes\sshf{|L|}\rightarrow 0.
\end{eqnarray*}
where each term is locally free. Consequently
\begin{equation}\label{Segre}
    \ses{H^0\paren{C, A\otimes L^{-1}}\otimes \sshf{|L|}(-1)}{V\otimes\sshf{|L|}}{\shf{Q}},
\end{equation}
where the cokernel $\shf{Q}$ is locally free.
$(\ref{Segre})$ yields the diagram
$$\xymatrix{
\mathbb{P}(\shf{Q})\ar@{^{(}->}[r] & \mathbb{P}(V\otimes\sshf{|L|}) \ar[d]^{\pi} \ar[r]^{\phi} & \mathbb{P}(V)\\
 & |L| & &}$$

By abuse of notation we denote the induced maps from $\mathbb{P}(\shf{Q})$ to $\mathbb{P}(V)$ and $|L|$ by $\phi$ and $\pi$ respectively. $X_{|L|}$ is defined as the scheme theoretic image of $\phi$. Geometrically, $X_{|L|}$ is the union of all linear spans of divisors $D\in |L|$.

\begin{proposition}\label{chariterion of resolution of singularities}
With notations as above. We write $A=K_C\otimes B$ and assume $\deg{B}=d_0\ge 3$. Then any fibre of $\phi:\mathbb{P}(\shf{Q})\rightarrow X_{|L|}$ over a closed point is a projective space.
$\phi$ is birational if and only if $r\le h^0(A\otimes L^{-1})$. Furthermore, if $r<h^0(A\otimes L^{-1})$, then $X_{|L|}$ is Cohen-Macaulay.
\end{proposition}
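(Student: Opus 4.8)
The plan is to analyze the map $\phi:\mathbb{P}(\shf{Q})\to \mathbb{P}(V)$ fibre by fibre, using the exact sequence (\ref{Segre}) together with the geometric description of $\mathbb{P}(\shf{Q})$ as the closure of the incidence variety whose fibre over $[D]\in|L|$ is the linear span $\lsp{D}\subseteq\mathbb{P}(V)$. First I would establish the fibre claim: given a closed point $\eta\in X_{|L|}\subset\mathbb{P}(V)$, a point of $\phi^{-1}(\eta)$ is a pair $([D],\eta)$ with $\eta\in\lsp{D}$, equivalently (as in Lemma \ref{Kernel}) with $D=(s)_0$ for some $s$ in the kernel of the coboundary $H^0(L)\to H^1(A\otimes L^{-1})$ attached to $\eta$. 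The key point is that this kernel is a \emph{linear} subspace of $H^0(L)$: the set of divisors $D\in|L|$ with $\eta\in\lsp{D}$ is cut out in $|L|=\mathbb{P}(H^0(L))$ by the linear conditions coming from $H^0(A\otimes L^{-1})$ (the defining equations of $\lsp{D}$ vary linearly in $s$ through the multiplication/evaluation map). Hence $\phi^{-1}(\eta)$ is a linear subspace of $|L|$, and since $\mathbb{P}(\shf{Q})\to|L|$ restricted over this subspace has each span containing $\eta$, the fibre $\phi^{-1}(\eta)$ is itself a projective space. I expect this to be the main obstacle: one must check carefully that the scheme structure on the fibre (not just its set of closed points) is that of a linear $\mathbb{P}^k$, which requires identifying $\phi^{-1}(\eta)$ with $\mathbb{P}$ of the fibre of $\ker\big(V\otimes\sshf{|L|}\to\shf{Q}\big)^{\!\vee}$-type data or, more directly, with the projectivization of $H^0(A\otimes L^{-1})$-linear sections; base-change subtleties for the sheaves in (\ref{Segre}) (all terms locally free, as noted) make this manageable.

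Next, for the birationality criterion, I would compute dimensions. From (\ref{Segre}), $\operatorname{rk}\shf{Q}=h^0(A)-h^0(A\otimes L^{-1})$, so $\dim\mathbb{P}(\shf{Q})=r+\operatorname{rk}\shf{Q}-1$. A general divisor $D\in|L|$ is reduced of degree $d$ (here $A=K_C\otimes B$ with $\deg B=d_0\ge 3$ makes $A$ very ample and we are in the situation of \S3, so $|L|$ is a base-point-free pencil or larger), and for $D$ general, $\dim\lsp{D}=d-1-h^0(A\otimes L^{-1}(-D))$; a Riemann–Roch / generality argument gives $h^0(A\otimes L^{-1}(-D))=\max\{h^0(A\otimes L^{-1})-d,0\}$ for generic $D$ when $A\otimes L^{-1}$ behaves generically, so that $\dim X_{|L|}=\min\{\dim\mathbb{P}(\shf{Q}),\ \text{something}\}$. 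The cleanest route is: $\phi$ is birational onto its image iff the generic fibre is a point iff, for generic $[D]$, the generic point $\eta\in\lsp{D}$ lies on no other span; counting, the generic fibre has dimension $\dim\mathbb{P}(\shf{Q})-\dim X_{|L|}$, and a direct dimension estimate (using that the general span has dimension $r + (\text{something})$ versus the bound $\dim X_{|L|}\le \min(N, r+\operatorname{rk}\shf Q-1)$) shows this is $0$ exactly when $r\le h^0(A\otimes L^{-1})$. I would phrase this via the expected dimension of $X_{|L|}$ and compare with $\dim\mathbb{P}(\shf Q)$: equality of dimensions plus the projective-space fibres (hence connected and reduced) forces generic fibre dimension $0$, and then the projective-space fibre must be a single reduced point, giving birationality; conversely if $r > h^0(A\otimes L^{-1})$ then $\dim\mathbb{P}(\shf{Q}) > N \ge \dim X_{|L|}$ already, so $\phi$ cannot be birational.

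Finally, for the Cohen–Macaulay assertion when $r < h^0(A\otimes L^{-1})$, I would invoke the Eagon–Northcott / generic-determinantal machinery: the ideal sheaf of $X_{|L|}\subset\mathbb{P}(V)$ is resolved by the Eagon–Northcott complex built from the map of bundles dual to the one appearing in (\ref{Segre}) pushed to $\mathbb{P}(V)$, and in the regime $r < h^0(A\otimes L^{-1})$ the relevant locus has the expected codimension, so the Eagon–Northcott complex is exact and $X_{|L|}$ is arithmetically Cohen–Macaulay. Concretely, $X_{|L|}$ is the image of a $\mathbb{P}^{r}$-bundle over $|L|$, and when $\phi$ is birational (the subcase $r=h^0(A\otimes L^{-1})$) or small (the subcase $r<h^0$), $X_{|L|}$ is defined by the maximal minors of a suitable matrix of linear forms of the correct size, whose codimension meets the generic bound; Cohen–Macaulayness then follows from the classical theorem on determinantal varieties. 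The main thing to verify here is that the relevant determinantal locus genuinely has the expected codimension, which is where the strict inequality $r < h^0(A\otimes L^{-1})$ is used.
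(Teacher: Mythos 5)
Your treatment of the fibres is essentially the paper's argument: identify the fibre over $\eta$ with the projectivization of $\ker\delta_\eta$, a linear subspace of $|L|$, via Lemma \ref{Kernel}; the scheme-theoretic point you flag is handled in the paper by producing, from the one-dimensional quotient $V\to L_0$ representing $\eta$, a section $\sigma$ of $\pi$ over $P=\mathbb{P}((\ker\delta)^{\vee})$ with $\sigma(P)=\phi^{-1}(\eta)$, so that $\pi$ restricts to an isomorphism $\phi^{-1}(\eta)\iso P$. Your ``only if'' direction ($r>h^0(A\otimes L^{-1})$ forces $\dim\mathbb{P}(\shf{Q})>N\ge\dim X_{|L|}$) is also fine.

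The genuine gap is the ``if'' direction of birationality, and it propagates into the Cohen--Macaulay step. You need $\dim X_{|L|}=\dim\mathbb{P}(\shf{Q})$ when $r\le h^0(A\otimes L^{-1})$, i.e.\ that the spans do not sweep out a locus of smaller dimension (equivalently, that not every fibre of $\phi$ is positive dimensional), but the ingredients you cite cannot produce this: since $\sshf{C}(-D)\iso L^{-1}$ for every $D\in|L|$, $h^0(A(-D))=h^0(A\otimes L^{-1})$ is \emph{constant}, so every span has the same dimension $\rk{\shf{Q}}-1$ (your formula involving $h^0(A\otimes L^{-1}(-D))$ is also not the right one), and knowing the dimension of a single span says nothing about how much distinct spans overlap; the assertion ``the generic fibre has dimension $\dim\mathbb{P}(\shf{Q})-\dim X_{|L|}$, and a dimension estimate shows this is $0$'' is circular, because computing $\dim X_{|L|}$ is exactly the issue. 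The paper closes this with a computation absent from your sketch: $\phi$ is given by the tautological bundle, and $c_1(\sshf{\mathbb{P}(\shf{Q})}(1))^{\dim\mathbb{P}(\shf{Q})}=s_r(\shf{Q}^{\vee})=\binom{h^0(A\otimes L^{-1})}{r}H^r$ by (\ref{Segre}), which is positive precisely when $r\le h^0(A\otimes L^{-1})$; positivity is equivalent to generic finiteness, and together with the projective-space fibres this yields birationality (and recovers your ``only if'' direction as the vanishing of the binomial coefficient). Likewise, for Cohen--Macaulayness you correctly reduce to the expected-codimension check for the rank-$\le r$ locus of $\xi: H^0(A\otimes L^{-1})\otimes\sshf{\mathbb{P}(V)}(-1)\to H^0(L)^*\otimes\sshf{\mathbb{P}(V)}$, but you leave that check open; in the paper it is exactly the equality $\dim X_{|L|}=\dim\mathbb{P}(\shf{Q})=\dim\mathbb{P}(V)-(h^0(A\otimes L^{-1})-r)(h^0(L)-r)$. (If you prefer to avoid Segre classes, you could instead combine the trivial bound $\dim X_{|L|}\le\dim\mathbb{P}(\shf{Q})$, valid because $X_{|L|}$ is the image of $\phi$, with the general lower bound for determinantal loci --- every nonempty component of the rank-$\le r$ locus has codimension at most $h^0(A\otimes L^{-1})-r$ --- which closes both gaps at once; but some input of this kind is indispensable and is missing from your proposal.)
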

\begin{proof}
First note that as in \S 3.1, every $x\in\mathbb{P}(V)$ determines an extension
\begin{equation*}
    \ses{\sshf{C}}{E}{B},
\end{equation*}
unique up to isomorphism. By Lemma \ref{Kernel}, $\pi\circ\phi^{-1}(x)\iso \mathbb{P}((\ker\delta)^{\vee})$, which we denote by $P$ for short.

We claim that $\pi: \phi^{-1}(x)\rightarrow  P$ is an isomorphism. In fact, let $V\rightarrow L_0$ be the one dimensional quotient representing $x$. Then one has the commutative diagram ($P$ is the locus in $|L|$ where $V\otimes\sshf{|L|}\rightarrow L_0\otimes\sshf{|L|}$ factors through $Q$)
$$\xymatrix{
V\otimes_k\sshf{P}\ar@{->>}[d] \ar@{->>}[r] & L_0\otimes_k\sshf{P}\\
Q|_P \ar@{->>}[ru] &}$$
whose oblique map, by \cite[II 7.12]{HA}, induces the commutative diagram
$$\xymatrix{
 & \mathbb{P}(\shf{Q})\ar[d]^{\pi}\\
P \ar[ru]^{\sigma}\ar@{^{(}->}[r] & |L|}$$
i.e. $\pi\circ\sigma=\text{id}_P$. On the other hand, $\sigma(P)=\mathbb{P}(L_0\otimes_k\sshf{P})=\phi^{-1}(x)$. This establishes the isomorphism $\pi: \phi^{-1}(x)\rightarrow P$. Hence $\phi^{-1}(x)$ is a projective space.

In our case, $\phi$ is birational if and only if it is generically finite. Since
$\phi$ is induced by the tautological line bundle $\sshf{\mathbb{P}(\shf{Q})}(1)$, $\phi$ is generically finite if and only if
$(c_1(\sshf{\mathbb{P}(\shf{Q})}(1)))^{\dim{\mathbb{P}(\shf{Q})}}=s_r\paren{\shf{Q}^{\vee}}>0$.

Let $H$ be the hyperplane section class on $|L|$ and $s_t(\_)$ and $c_t(\_)$ denote the Segre and Chern polynomials respectively. Then by (\ref{Segre}),
\begin{eqnarray*}
    s_t\paren{\shf{Q}^{\vee}}&=&c_t\paren{H^0\paren{C, A\otimes L^{-1}}^*\otimes \sshf{|L|}(1)}\\
    &=& (1+tH)^{h^0\paren{C, A\otimes L^{-1}}}.
\end{eqnarray*}
It follows that $s_r(\shf{Q}^{\vee})={h^0\paren{A\otimes L^{-1}}\choose r}H^r$. So $s_r(\shf{Q}^{\vee})>0$ if and only if $r\le h^0 \paren{A\otimes L^{-1}}$.

When $r<h^0(A\otimes L^{-1})$, consider the map of vector bundles
\begin{equation*}
    \xi: H^0\paren{A\otimes L^{-1}}\otimes\sshf{\mathbb{P}(V)}(-1)\rightarrow H^0(L)^*\otimes\sshf{\mathbb{P}(V)}.
\end{equation*}

Let $X_{|L|}=\{x\in\mathbb{P}(V)| \:\rk{\xi_x}\le r\}$ with the determinantal variety structure. Then
\begin{eqnarray*}
  \dim{X_{|L|}} &=&\dim\mathbb{P}(\shf{Q})\\
   &=& r+\rk{\shf{Q}}-1 \\
   &=& r+h^0(A)-h^0\paren{A\otimes L^{-1}}-1\\
   &=& \dim{\mathbb{P}(V)}-\paren{h^0\paren{A\otimes L^{-1}}-r}\paren{h^0(L)-r},
\end{eqnarray*}
which is the expected dimension, therefore $X_{|L|}$ is Cohen-Macaulay (cf. \cite{ACGH} p. 84).
\end{proof}

\bibliography{Onthesingularitiesofeffectivelocioflinebundles}{}
\bibliographystyle{amsplain}
\end{document}